\newtheorem{theorem}{Theorem}[section]
\newtheorem{proposition}{Proposition}[section]
\newtheorem{lemma}{Lemma}[section]
\newtheorem*{theorem1.2}{Theorem 1.2}
\newtheorem*{theorem1.3}{Theorem 1.3}
\newcommand{\tmop}[1]{\ensuremath{\operatorname{#1}}}
\numberwithin{equation}{section}
\def\pamod{\! \! \! \! \pmod}
\begin{document}

\title[On the distribution of the divisor function  and Hecke eigenvalues]
{On the distribution of the divisor function \\ and Hecke eigenvalues}

\author[S. Lester and N. Yesha]{Stephen Lester and Nadav Yesha}

\address{School of Mathematical Sciences, Tel Aviv University,
Tel Aviv, 69978, Israel }

\email{slester@post.tau.ac.il}

\address{School of Mathematical Sciences, Tel Aviv University,
Tel Aviv, 69978, Israel }

\email{nadavye1@post.tau.ac.il}

\date{}

\begin{abstract}
We investigate the behavior
of the divisor function in both short intervals and in arithmetic progressions.
The latter problem was recently studied by 
\'E. Fouvry, S. Ganguly, E. Kowalski, and Ph. Michel. We prove a complementary
result to their main theorem.
We also show that in short
intervals of certain lengths the divisor function has a Gaussian limiting distribution. 
The analogous problems
for Hecke eigenvalues are also
considered.
\end{abstract}

\subjclass[2010]{11N60, 11F11, 11F30, 60F05}

\keywords{}

\maketitle

\section{Introduction}
The study of the behavior of
\[
\Delta(x)=\sum_{n \le x} d(n)-x\big(\log x+2\gamma-1 \big),
\]
where $\gamma$ is Euler's constant is a classical topic in analytic number theory. For instance,
Dirichlet's divisor problem asks for the smallest $\alpha$ such that
\[
\Delta(x) \ll x^{\alpha+\varepsilon},
\]
for all $\varepsilon>0$.
Dirichlet showed that $\alpha \le 1/2$, which was sharpened by Voronoi \cite{Voronoi} who proved  
$\alpha \le 1/3$. A more  recent result of Huxley \cite{Huxley} gives $\alpha \le 131/416$.
On the other hand, Hardy \cite{Hardy} proved that $\Delta(x)=\Omega((x \log x)^{1/4} \log \log x)$,
and it is conjectured that $\alpha=1/4$.

In this article we study the average behavior of $d(n)$ on two different sparse sets, namely,
short intervals and arithmetic progressions modulo a large prime number. The similarities
between these two problems are striking and in the analogous problems for function fields 
over a finite field there
is a fundamental identity that clarifies this connection in that setting (see Lemma 4.2 of \cite{RudnicKeating} for a similar identity).

\subsection{The divisor function in arithmetic progressions}
The behavior of divisor function
in an arithmetic progression
has been studied by numerous authors.
For instance,
Blomer \cite{Bl} and 
Lau and Zhao \cite{ZL} have 
investigated the variance of sums of the divisor function in progressions. Notably, Lau and Zhao prove
an asymptotic formula
for the variance of sums of the divisor function $d(n)$ with $1 \le n \le X$
in arithmetic progressions modulo $q$,
for $q$ satisfying $X^{1/2}<q<X^{1-\epsilon}$.

Instead of working with a sum of $d(n)$
over $1 \le n \le X$ it is technically advantageous
to consider smoothed sums of the form
\[
 \sum_{n} d(n) w\Big(\frac{n}{X} \Big),
\]
where $w$ is a smooth function compactly supported on the positive real numbers.
Recently, \'E. Fouvry, S. Ganguly, E. Kowalski, and Ph. Michel ~\cite{FGKM}
studied the distribution of smoothed sums of $d(n)$ over arithmetic progressions
modulo a prime number, $p$ and showed that it has a Gaussian limiting distribution as $p \rightarrow \infty$, for $p^{2-\epsilon} \ll X =o(p^{2})$.

To state their result more precisely, let $w$ be a real-valued smooth function compactly supported in the positive real numbers. For a prime $p$, define
\[
\mathcal S_{d}(X,p,a;w)=\sum_{\substack{n \ge 1 \\ n \equiv a  \pamod p} } d(n) w\Big(\frac{n}{X} \Big),
\]
and
\[
\mathcal M_{d}(X,p;w)=\frac1p \sum_{n \ge 1} d(n)w\Big(\frac{n}{X}\Big)- \frac{1}{p^2}\int_0^{\infty} (\log y+2\gamma-2\log p)w\Big(\frac{y}{X}\Big) \, dy.
\]
Also, let
\[
E_{d}(X,p,a;w) = \frac{\mathcal S_{d}(X,p,a;w)-\mathcal M_{d}(X,p;w)}{ \lVert w \rVert_{L^2} \sqrt{2 \pi^{-2} (X/p) \cdot \log^3( p^2/X+2)}},
\]
Theorem 1.1 of \cite{FGKM}
states that for $X=p^2/\Phi(p)$, where $\Phi(p)\ge 1$
is a function that tends to infinity with $p$
in a way such that $\log \Phi(p) =o(\log p)$, 
 as
 $p \rightarrow \infty$, 
\begin{equation} \label{Theorem fgkm}
 \frac{1}{p-1} \# \bigg\{ a \in \mathbb F_p^{\times} : \alpha \le E_{d}(X,p,a;w)  
\le \beta\bigg\}=\frac{1}{\sqrt{2\pi}} \int_{\alpha}^{\beta} \! e^{-x^2/2} \, dx+o(1).
\end{equation}

This theorem is proved by calculating the moments of $E_{d}(X,p,a;w)$. These
moments are estimated through an application of the Voronoi summation formula
along with estimates of moments
of Kloosterman sums due to N. Katz. 

We are interested in seeing if
the smooth weight function can be replaced
with a sharp cut-off function in this result.
This is because in some cases smoothing completely
alters the nature of the problem.
For instance, smoothing 
substantially changes the Dirichlet
divisor problem. One can prove for any $A \ge 1$ and a smooth function $w$
that is compactly supported on the positive real numbers that
\[
\sum_{n \ge 1} d(n) w\Big( \frac{n}{X} \Big)
=\int_0^{\infty} (\log y+2\gamma) w\Big( \frac{y}{X} \Big) \, dy+O(X^{-A}),
\]
where the implied constant depends on $w$ and $A$. Thus,
the remainder term is very small, unlike that of
in the original Dirichlet divisor problem so that 
the smooth weight function cannot
be replaced with a sharp cut-off function here.  
Moreover, something similar happens in the case of
smoothed sums of the divisor function in arithmetic progressions modulo a prime number $p$ when $p$ is small
relative to $X$. Here one can show that $E_d(X,p,a;w) \ll X^{-A}$ if $X^{\varepsilon}<p<X^{1/2-\varepsilon}$
(see Lemma \ref{voronoi} below). From 
this we see
that understanding the distribution of $E_d(X,p, a;w)$ in this regime is trivial since it is always smaller than
any negative power of $X$. This is an effect of smoothing (see Theorem 4 of ~\cite{ZL}),
and it would be interesting to study the distribution 
of a sharp cut-off analog of $E_d(X,p, a;w)$ in this regime.

We show that an analog of Theorem 1.1 of \cite{FGKM} holds for sums of the divisor function with sharp cut-offs. 
The process of removing the weight function is subtle.
Our method requires the existence of a compactly supported function $w_p$, that depends on $p$, such
that: $1)$ $w_p$ approximates $\mathbf 1_{[0,1]}$
as $p \rightarrow \infty$ in a suitably strong sense; $2)$
a certain integral transform of $w_p$, arising
from the Voronoi summation formula, decays sufficiently rapidly.


\begin{theorem} \label{sharp divisor}
For a prime $p$, let
\[
\mathcal S_{d}(X,p,a)=\sum_{\substack{1 \le n \le X \\ n \equiv a  \pamod p} } d(n) ,
\]
and
\[
\mathcal M_{d}(X,p)=\frac1p \sum_{1 \le n \le X} d(n)- \frac{X}{p^2}\Big(\log X-1+2\gamma-2\log p\Big).
\]
Also, let
\[
E_{d}(X,p,a) = \frac{\mathcal S_{d}(X,p,a)-\mathcal M_{d}(X,p)}{\sqrt{2 \pi^{-2} (X/p) \cdot \log^3( p^2/X+2)}}.
\]
For $X=p^2/\Phi(p)$, where $\Phi(p)\ge 1$
is a function that tends to infinity with $p$
in a way such that $\log \Phi(p) =o(\log p)$. As
 $p \rightarrow \infty$, we have
\[
 \frac{1}{p-1} \# \bigg\{ a \in \mathbb F_p^{\times} : \alpha \le   E_{d}(X,p,a) 
\le \beta\bigg\}=\frac{1}{\sqrt{2\pi}} \int_{\alpha}^{\beta} \! e^{-x^2/2} \, dx+o(1).
\]
\end{theorem}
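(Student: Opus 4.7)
The plan is to sandwich the sharp cutoff $\mathbf{1}_{[0,1]}$ between two smooth approximations $w_p^{-}$ and $w_p^{+}$, apply an analog of Theorem 1.1 of \cite{FGKM} to the smoothed weights, and control the discrepancy between the smoothed and sharp cut-off sums. Concretely, for a parameter $\eta=\eta(p)\to 0$ to be chosen, I would construct smooth, compactly supported functions $w_p^{\pm}\colon[0,\infty)\to[0,1]$ satisfying $w_p^{-}\le\mathbf{1}_{[0,1]}\le w_p^{+}$ and agreeing with $\mathbf{1}_{[0,1]}$ outside transition zones of width $\eta$ near $0$ and $1$. These yield the pointwise sandwich
\[
\mathcal{S}_{d}(X,p,a;w_p^{-})\;\le\;\mathcal{S}_{d}(X,p,a)\;\le\;\mathcal{S}_{d}(X,p,a;w_p^{+}),
\]
and hence two-sided control on $E_{d}(X,p,a)$ in terms of its smoothed counterpart, provided the main terms $\mathcal{M}_{d}(X,p;w_p^{\pm})$ match $\mathcal{M}_{d}(X,p)$ up to acceptable error.

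The first step is to verify that a version of the FGKM theorem applies uniformly to the family $\{w_p^{\pm}\}$. Their argument proceeds via Voronoi summation, which expresses the centred sum $\mathcal{S}_{d}(X,p,a;w)-\mathcal{M}_{d}(X,p;w)$ as a Kloosterman-twisted sum weighted by a Bessel-type transform $\check{w}$, and then extracts a Gaussian limit from moments of Kloosterman sums \`a la Katz. Pushing this through with $w=w_p^{\pm}$ requires that $\check{w}_p^{\pm}$ decay at a rate sufficient to truncate the dual sum, with implicit constants growing at most sub-polynomially in $p$; the standard bound $\check{w}_p^{\pm}(y)\ll_{k}(\eta y)^{-k}$ suffices provided $\eta$ decays slower than any negative power of $\log p$. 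Since $\lVert w_p^{\pm}\rVert_{L^{2}}\to 1$ as $\eta\to 0$, the Gaussian limit for the smoothed statistic normalised as in Theorem 1.1 of \cite{FGKM} carries over.

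The second step is to control the approximation error by a second-moment argument. The average over $a\in\mathbb{F}_{p}^{\times}$ of the boundary contribution
\[
\sum_{\substack{n\equiv a\pmod p\\ n\le\eta X\ \text{or}\ X<n\le(1+\eta)X}}d(n)
\]
is of order $\eta X\log X/p$, while a Parseval-type variance bound (or an appeal to Lau--Zhao type estimates in the appropriate range) gives mean square over $a$ of order $\eta X\log^{O(1)}(p^{2}/X)/p$. Comparing with the square of the normalisation, which is of size $X\log^{3}(p^{2}/X+2)/p$, one obtains that the difference between $E_{d}(X,p,a)$ and its smoothed analog tends to zero in $L^{2}$-mean over $a$, which is ample for convergence in distribution. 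The hypothesis $\log\Phi(p)=o(\log p)$ ensures that $\log^{3}(p^{2}/X+2)$ grows with $p$, so that the normalisation is not overwhelmed by the boundary error.

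The main obstacle, as emphasised in the introduction, is balancing the two competing roles of $\eta$: it must be small enough that the transition zones contribute $o(1)$ to the normalised statistic, yet large enough that $\check{w}_p^{\pm}$ decays rapidly enough to close the moment calculation for each fixed moment uniformly in $p$. Identifying a common regime for $\eta$ -- concretely, a choice like $\eta=1/\log^{C}p$ with $C$ sufficiently large -- and verifying that the Voronoi transform satisfies the required decay estimates is the technical heart of the argument, and is precisely what the $p$-dependent mollifier $w_{p}$ alluded to in the introduction is designed to achieve.
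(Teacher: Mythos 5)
Your high-level strategy (approximate the indicator by smooth weights, run the Gaussian limit for the smoothed statistic, and control the discrepancy in mean square via Voronoi/Plancherel) is in the right spirit, but it diverges from the paper's argument at the key structural point, and the divergence creates real gaps. The paper never applies the FGKM theorem to $p$-dependent weights. Instead it uses a two-step decomposition: FGKM is cited as a black box only for a \emph{fixed} weight $w_{\varepsilon}$; then Lemma \ref{smooth to smooth} compares $E_d(X,p,a;w_{\varepsilon})$ with $E_d(X,p,a;w_{\delta})$ in $L^2$ over $a$ for a weight $w_{\delta}$ whose transition width $\delta=p^{-2/3}$ shrinks \emph{polynomially}; finally Lemma \ref{smooth to sharp} compares $w_{\delta}$ with the sharp cutoff by a \emph{trivial pointwise} bound, which works precisely because the transition zones of $w_\delta$ contain only $O(\delta X/p)=O(p^{1/3})$ integers per progression, each contributing $\ll X^{\epsilon}$, giving an error $\ll p^{-1/6+\epsilon}$ after normalization. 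The fixed parameter $\varepsilon$ is sent to $0$ only at the very end. Your single-step plan with $\eta=\log^{-C}p$ instead requires re-running the entire FGKM moment computation (all moments, with Katz's Kloosterman estimates) uniformly for a shrinking family of weights; you assert this "carries over" from a decay bound $\check w(y)\ll(\eta y)^{-k}$, but this is a substantial re-derivation, not a citation, and the actual transform bounds have a different shape (the paper's Lemma \ref{Bessel} gives $\xi^{-A/2-1/4}\delta^{1-A}$, exploiting oscillation of the Bessel kernels). The paper's architecture exists precisely to avoid this.

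The second gap is your variance bound for the boundary contribution. For the transition zone near $n=X$, i.e. $(1-\eta)X<n\le X$, you are asking for the variance in progressions of $d(n)$ over a \emph{short interval at height} $X$, which is not what Lau--Zhao prove (their result is for initial segments $[1,Y]$), and writing it as a difference of two initial segments destroys the saving. Establishing that this variance is $\ll \eta (X/p)\log^3(p^2/X)$ is essentially the whole content of the paper's Lemma \ref{simple est} and Lemma \ref{smooth to smooth}: one needs Voronoi summation, the Hankel--Plancherel identity $\lVert\mathcal B_d(\phi)\rVert_{L^2}=\lVert\phi\rVert_{L^2}$, the tail estimates of Lemmas \ref{Bessel} and \ref{deriv est} for the transforms of the shrinking weights, and the Kloosterman orthogonality \eqref{kloosterman} to handle the dual frequencies $|n|>p/2$. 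Finally, a quantitative inconsistency: if you use the pointwise sandwich literally, you need the mean mismatch $\mathcal M_d(X,p;w_p^{\pm})-\mathcal M_d(X,p)\asymp \eta X\log X/p$ to be $o$ of the normalization $\sqrt{(X/p)\log^3(p^2/X)}$, which with $X=p^2/\Phi(p)$ forces $\eta\ll p^{-1/2+o(1)}$ and rules out $\eta=\log^{-C}p$; the centered $L^2$ comparison avoids this, but then the sandwich is doing no work. In short: the intended conclusion is reachable along your lines, but the two steps you treat as routine (uniform FGKM for shrinking weights, and the short-interval-at-height-$X$ variance) are exactly where the difficulty lives, and the paper's intermediate-weight decomposition is the device that makes both unnecessary.
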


Let $d_k(n)$ be the number of ways
of writing $n$ as a product of $k$ factors.
E. 
Kowalski and G. Ricotta \cite{KR} prove an analog of Theorem
1.1 of \cite{FGKM}
for $d_k(n)$ for any integer
$k \ge 3$. We have not succeeded in removing the smooth weight
 for any $k  > 2$.

\'E. Fouvry, S. Ganguly, E. Kowalski, and Ph. Michel also prove an analog
of \eqref{Theorem fgkm} for holomorphic Hecke cusp forms of weight $k$ and level one (see Corollary 1.4 of \cite{FGKM}).
We prove an analog of that result for a sharp cut-off
function as well.

Let $f$ be a primitive (Hecke eigenform) cusp form of even weight
$k$ and level 1, and consider its Fourier expansion
\[
f\left(\tau\right)=\sum_{n=1}^{\infty}\rho_f\left(n\right)n^{\frac{k-1}{2}}e\left(n\tau\right)
\]
where $f$ is normalized so that $\rho_f\left(1\right)=1$, so $\rho_f\left(n\right)$ is the eigenvalue of the (suitably normalized) Hecke operator $T\left(n\right)$.
Let $c_{f}=\frac{\left(4\pi\right)^{k}}{\Gamma\left(k\right)}\left\Vert f\right\Vert ^{2}.$ Here we used the notation
\[
\left\Vert f\right\Vert ^{2}=\frac{3}{\pi}\iint y^{k}\left|f\left(x+iy\right)\right|^{2} \frac{\mbox{d}x\mbox{d}y}{y^2}
\]
where the integral is taken over any fundamental domain for $SL_{2}\left(\mathbb{Z}\right)$.

\begin{theorem} \label{sharp cusp forms}
Let $f$ be a Hecke cusp form of weight $k$ and level one.
For a prime $p$, let
\[
\mathcal S_{f}(X,p,a)=\sum_{\substack{1 \le n \le X \\ n \equiv a  \pamod p} } \rho_f(n) ,
\]
and
\[
\mathcal M_{f}(X,p)=\frac1p \sum_{1 \le n \le X} \rho_f(n).
\]
Also, let
\[
 E_{f}(X,p,a) = \frac{\mathcal S_{f}(X,p,a)-\mathcal M_{f}(X,p)}{(c_f \cdot X/p)^{1/2}}.
\]
For $X=p^2/\Phi(p)$, where $\Phi(p)\ge 1$
is a function that tends to infinity with $p$
in a way such that $\log \Phi(p) =o(\log p)$. As
 $p \rightarrow \infty$, we have
\[
 \frac{1}{p-1} \# \bigg\{ a \in \mathbb F_p^{\times} : \alpha \le   E_{f}(X,p,a) 
\le \beta\bigg\}=\frac{1}{\sqrt{2\pi}} \int_{\alpha}^{\beta} \! e^{-x^2/2} \, dx+o(1).
\]
\end{theorem}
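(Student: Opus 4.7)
The plan is to deduce Theorem \ref{sharp cusp forms} from Corollary 1.4 of \cite{FGKM} (the smoothed analog) via a sandwich argument with carefully chosen approximations of $\mathbf{1}_{[0,1]}$, following the same strategy I use for Theorem \ref{sharp divisor}. The cusp form case is in fact technically cleaner than the divisor case, thanks to Deligne's bound $|\rho_f(n)| \ll n^{\varepsilon}$ and the absence of a secondary main term involving logarithms, so the bookkeeping is simpler.

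First, for each prime $p$ I construct smooth compactly supported majorants and minorants $w_p^{\pm}$ of $\mathbf{1}_{[0,1]}$ whose transition regions have width $\delta_p \to 0$. The key requirement is that the Bessel-type integral transform of $w_p^{\pm}$ arising in the Voronoi summation formula for $\rho_f$ (a $J_{k-1}$-transform) decays fast enough that the high frequencies in the dual sum contribute negligibly compared to the predicted variance $c_f X/p$. This is the same construction as in the divisor case, adapted to the $J_{k-1}$ kernel in place of the $K_0, Y_0$ kernels.

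Next, I compare the sharp cut-off $E_f(X,p,a)$ with its smoothed counterparts $E_f^{\pm}(X,p,a)$ defined using $w_p^{\pm}$. The discrepancy
\[
\mathcal{S}_f(X,p,a) - \sum_{n \equiv a \pamod p} \rho_f(n) w_p^{\pm}(n/X)
\]
is supported on $n \equiv a \pamod p$ lying in the narrow transition band of $w_p^{\pm}$. I bound its second moment over $a \in \mathbb{F}_p^{\times}$ by applying Voronoi summation to the majorant--minorant differences and exploiting the decay of the Bessel transform, showing that it is of smaller order than $X/p$. Combined with Corollary 1.4 of \cite{FGKM} applied to $w_p^{\pm}$ (which yields the Gaussian limit for $E_f^{\pm}$), a standard sandwich argument via Chebyshev's inequality then transfers the Gaussian distribution from $E_f^{\pm}$ to $E_f$.

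The main obstacle, familiar from the divisor case, is the simultaneous optimization: the width $\delta_p$ must be small enough that $w_p^{\pm}$ approximates $\mathbf{1}_{[0,1]}$ sufficiently well in $L^1$, yet large enough that its $J_{k-1}$-transform retains acceptable decay. This is essentially an uncertainty-principle tradeoff. The hypothesis $X=p^2/\Phi(p)$ with $\log \Phi(p) = o(\log p)$ provides just enough slack to satisfy both conditions, and one can for instance take $\delta_p$ to be a small negative power of $\Phi(p)$.
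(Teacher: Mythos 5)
Your high-level strategy (deduce the sharp-cutoff result from Corollary 1.4 of \cite{FGKM} by comparing with smoothed sums) is the right one, but as written the proposal has two genuine gaps. First, the majorant--minorant sandwich is unusable precisely in the cusp form case: since $\rho_f(n)$ changes sign, the inequalities $w_p^-(n/X)\le \mathbf 1_{[0,1]}(n/X)\le w_p^+(n/X)$ do \emph{not} transfer to inequalities between $\sum_{n\equiv a}\rho_f(n)w_p^{\pm}(n/X)$ and $\mathcal S_f(X,p,a)$, so there is nothing to sandwich. (This device would be harmless for $d(n)\ge 0$, but your claim that the cusp form case is ``technically cleaner'' overlooks exactly this point.) The fix, which is what the paper does, is to abandon one-sided comparisons entirely and instead bound $|E_f(X,p,a)-E_f(X,p,a;w_\delta)|$ directly: the difference $\mathbf 1_{[0,1]}-w_\delta$ is supported on two bands of width $O(\delta)$, and a \emph{trivial} bound via Deligne gives $\ll \delta X^{1+\varepsilon}/p$ uniformly in $a$, which beats the normalization $(X/p)^{1/2}$ once $\delta\ll (p/X)^{1/2}(pX)^{-\eta}$. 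Note that Voronoi summation cannot be applied to this difference at all, since $\mathbf 1_{[0,1]}-w_p^{\pm}$ is not smooth; your plan to control the sharp-versus-smooth discrepancy ``by applying Voronoi summation'' to it does not get off the ground. Relatedly, your suggested width $\delta_p=\Phi(p)^{-c}=p^{-o(1)}$ is far too large: the transition band then contains $\approx \delta_p X/p=p^{1-o(1)}$ terms in each progression, swamping the normalization $p^{1/2+o(1)}$. One needs $\delta$ to be roughly a genuine negative power of $p$ (the paper takes $\delta=p^{-2/3}$).

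Second, and more importantly, you cannot apply Corollary 1.4 of \cite{FGKM} to the $p$-dependent weights $w_p^{\pm}$: that result is for a \emph{fixed} smooth compactly supported $w$, and its $o(1)$ depends on $w$ (the derivatives of $w_p^{\pm}$ blow up as $\delta_p\to 0$). This is the actual crux of removing the smoothing, and your proposal is missing the intermediate step that resolves it. The paper inserts a ``smooth-to-smooth'' comparison: apply \cite{FGKM} to a fixed $w_\varepsilon$, then show that
\[
\frac{1}{p-1}\sum_{1\le a<p}\bigl(E_f(X,p,a;w_{\delta})-E_f(X,p,a;w_{\varepsilon})\bigr)^2=O(\varepsilon)
\]
for $(p/X)^{3/4-\eta}\ll\delta<\varepsilon$. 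Proving this requires Voronoi summation applied to the (smooth) difference $w_\delta-w_\varepsilon$, the quasi-orthogonality of Kloosterman sums $\tmop{Kl}_2(a,n;p)$ averaged over $a$, sharp decay estimates for the $J_{k-1}$-transform of $w_\delta$ (with explicit dependence on $\delta$), and the Rankin--Selberg asymptotic $\sum_{n\le X}\rho_f(n)^2=c_fX+O(X^{3/5})$ to show the diagonal contribution is $O(\varepsilon)$. Only after this bridge is the window $\delta\asymp p^{-2/3}$ simultaneously small enough for the trivial sharp-versus-smooth comparison and large enough for the Bessel-transform decay to control the dual sum. Without this intermediate lemma (or, alternatively, a re-proof of the moment computations of \cite{FGKM} with uniform dependence on the weight), the deduction does not close.
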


\subsection{The divisor function in short intervals} 
Heath-Brown studied the distribution of the normalized remainder term $x^{-1/4}\Delta\left(x\right)$ as $x\to\infty$, and proved that it has a limiting distribution function \cite{Heath-Brown}. The behavior of the remainder term for the divisor problem in short intervals was studied by several authors (cf. \cite{Jutila2,Ivic}). For example, Ivi\'{c} proved in \cite{Ivic} an asymptotic formula for the second moment of $\Delta\left(x+U\right)-\Delta\left(x\right)$, where $T^\varepsilon \le U=U\left(T\right)\leq T^{1/2-\varepsilon}$:
\[
\frac{1}{T}\int_{T}^{2T}\left(\Delta\left(x+U\right)-\Delta\left(x\right)\right)^{2}\mbox{d}x\sim\frac{8}{\pi^{2}}U\log^{3}\left(\frac{\sqrt{T}}{U}\right).
\]

We study the distribution of the sums of the divisor function in short intervals of the form $\left[x,x+\sqrt{x}/L\right]$, where $L$ grows to infinity in 
a way such that $\log L=o(\log x)$.
\begin{theorem} \label{Main theorem} Let $L=L\left(T\right)\to\infty$ as $T\to\infty$,
with the condition that $\log L=o(\log T)$.
 For $\alpha<\beta$ as $T \rightarrow \infty$ we have
\[
\frac1T \tmop{meas}\Big\{ x \in [T, 2T] : \alpha \le  \frac{ \Delta\Big(x+\sqrt{x}/L\Big)-\Delta(x)}{x^{1/4} \sqrt{\frac{8}{\pi^2} \frac{\log^3 L}{L}}} \le \beta \Big\}
=\frac{1}{\sqrt{2\pi}}\int_{\alpha}^{\beta} \, e^{-t^2/2} \, dt+o(1).
\]
\end{theorem}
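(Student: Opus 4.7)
The plan is to establish the Gaussian limit via the method of moments, in the spirit of Heath-Brown's work on $\Delta(x)$ and parallel to the approach in Theorems \ref{sharp divisor}--\ref{sharp cusp forms}. First I would invoke a truncated Voronoi summation formula,
\[
\Delta(y) = \frac{y^{1/4}}{\pi\sqrt{2}} \sum_{n \le N} \frac{d(n)}{n^{3/4}} \cos\bigl(4\pi\sqrt{ny} - \tfrac{\pi}{4}\bigr) + \operatorname{error}(y,N),
\]
and apply the product-to-sum identity $\cos A - \cos B = -2\sin\tfrac{A+B}{2}\sin\tfrac{A-B}{2}$ with $A = 4\pi\sqrt{n(x+h)} - \pi/4$, $B = 4\pi\sqrt{nx}-\pi/4$, where $h = \sqrt{x}/L$. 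Since $\sqrt{n(x+h)} - \sqrt{nx} \approx \sqrt{n}/(2L)$, this yields
\[
\Delta(x+h) - \Delta(x) \approx -\frac{\sqrt{2}}{\pi}\, x^{1/4}\sum_{n}\frac{d(n)}{n^{3/4}}\sin\bigl(\pi\sqrt{n}/L\bigr)\sin\bigl(4\pi\sqrt{nx}+\psi_n(x)\bigr),
\]
where $\psi_n$ is a slowly varying lower-order phase. The factor $\sin(\pi\sqrt{n}/L)$ is of size $\sqrt{n}/L$ for $n \le L^2$ and oscillates rapidly beyond, effectively truncating the sum near $n \asymp L^2$.

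For each fixed positive integer $k$ I would then compute the $k$-th moment
\[
M_k(T) := \frac{1}{T}\int_T^{2T}\Bigl(\frac{\Delta(x+\sqrt{x}/L)-\Delta(x)}{x^{1/4}\sqrt{(8/\pi^2)\log^3(L)/L}}\Bigr)^{\!k} dx.
\]
Expanding the $k$-th power and each $\sin$ into exponentials, this reduces, up to a harmless constant, to
\[
M_k(T) \approx c_k\sum_{n_1,\ldots,n_k}\prod_{i=1}^k\frac{d(n_i)\sin(\pi\sqrt{n_i}/L)}{n_i^{3/4}}\cdot\frac{1}{T}\int_T^{2T}\prod_{i=1}^k\sin\bigl(4\pi\sqrt{n_ix}+\psi_{n_i}(x)\bigr)\,dx.
\]
Repeated integration by parts shows that the inner integral is negligible unless the resonance condition $\sum_{i=1}^k \epsilon_i\sqrt{n_i}=0$ (with $\epsilon_i\in\{\pm 1\}$) holds, so the main contribution comes from \emph{diagonal} tuples.

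For $k$ odd, no valid diagonal exists and standard divisor-correlation bounds yield $M_k(T) = o(1)$. For $k=2m$, the main term comes from the $(2m-1)!!=(2m)!/(2^m m!)$ perfect matchings of $\{1,\ldots,2m\}$ into pairs $\{i,j\}$ with $n_i=n_j$ and opposite signs $\epsilon_i=-\epsilon_j$; each matching factorizes into $m$ independent second-moment sums, and by Ivi\'c's variance asymptotic
\[
\sum_{n\le L^2}\frac{d(n)^2}{n^{3/2}}\sin^2\bigl(\pi\sqrt{n}/L\bigr)\sim\frac{\pi^2}{8}\cdot\frac{\log^3 L}{L},
\]
together with the chosen normalization, I obtain $M_{2m}(T) = (2m-1)!! + o(1)$, the moments of a standard Gaussian; the Fr\'echet--Shohat theorem then gives the desired distributional convergence. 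The principal obstacle is the rigorous control of the \emph{off-diagonal} contributions, namely $k$-tuples with $\sum \epsilon_i\sqrt{n_i}$ nonzero but small (near-resonances), which will require van der Corput-type bounds for the oscillatory $x$-integral combined with shifted-convolution estimates of shape $\sum d(n)d(n+h)$ for tuples with two $n_i$'s close together, as well as a careful choice of the Voronoi truncation $N$ so that the tail error is $o(1)$ uniformly in each fixed $k$.
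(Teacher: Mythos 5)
Your skeleton (truncated Voronoi, product-to-sum identity, method of moments, diagonal terms giving Gaussian moments) is the same as the paper's, but two of your key claims about the combinatorics and the off-diagonal terms are wrong or unsubstantiated as stated, and fixing them is where the actual content of the proof lies.

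First, the diagonal is not what you say it is. The resonance condition $\sum_{i=1}^k \epsilon_i\sqrt{n_i}=0$ is \emph{not} equivalent to a perfect matching with $n_i=n_j$ and opposite signs: writing $n_i=q_if_i^2$ with $q_i$ square-free, Besicovitch's theorem (Lemma \ref{lin indp}) shows the solutions are exactly those with $\sum_{i:\,q_i=q}\epsilon_if_i=0$ for every square-free $q$. Thus $\sqrt{1}+\sqrt{1}-\sqrt{4}=0$ is a legitimate resonance, so your assertion that ``for $k$ odd, no valid diagonal exists'' is false, and for even $k$ there are diagonal tuples beyond matchings (e.g.\ $\sqrt{q}+\sqrt{9q}-\sqrt{4q}-\sqrt{4q}=0$). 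These extra terms do turn out to be negligible, but proving that is precisely the point of the paper's random model: one groups the Voronoi sum by square-free kernel into blocks $Y(q)=\sum_f(\cdots)X(q)^f$ with $X(q)$ i.i.d.\ uniform on the circle, and Lemma \ref{Y bd} bounds $\mathbb E\big((\tmop{Im}Y(q))^m\big)$ so that all non-matching diagonal contributions are $O(L^{-1+\varepsilon})$ after normalization. Your proposal has no mechanism for this.

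Second, your plan for the off-diagonal terms (van der Corput bounds plus shifted-convolution estimates for near-resonances) is exactly the hard route the paper avoids, and it is not clear it closes: for $k\ge 3$ there can be many tuples with $\sum\epsilon_i\sqrt{n_i}$ nonzero but smaller than $T^{-1/2}$ once the $n_i$ range up to $L^2$ or beyond. The paper's device is a two-step reduction: (i) an $L^2$ (second-moment only) estimate, Lemma \ref{conv prob}, shows that $S(x,L)$ may be replaced by a drastically truncated sum $S(x,L,M)$ with $M\ll T^{(1-\epsilon)/(2^{m-1}-1)}$; (ii) for such small $M$, the quantitative linear-independence lemma of Hughes--Rudnick gives $|\sum\epsilon_i\sqrt{n_i}|\ge (m\sqrt M)^{-(2^{m-1}-1)}\gg T^{-(1-\epsilon)/2}$ whenever the sum is nonzero, so after averaging against a \emph{smooth} weight $\omega(x/T)$ (another step you skip: a sharp $\frac1T\int_T^{2T}$ does not give arbitrary savings from integration by parts) every off-diagonal term is $O(T^{-A})$ with no near-resonances left to control. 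You would also need to repair the variance constant: with your normalization the relevant sum is $\sum_{n}d(n)^2n^{-3/2}\sin^2(\pi\sqrt n/L)\sim 8\log^3(L)/L$, not $(\pi^2/8)\log^3(L)/L$.
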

We remark that the analogous problem for the circle problem, i.e. the distribution of lattice points in thin annuli was studied by Hughes and Rudnick \cite{HR} -- the corresponding normalized remainder in this case has also a Gaussian limiting distribution

Next, we prove the analogous result for the distribution of the sum of Hecke eigenvalues
in short intervals.
Let
\[
A_{f}\left(x\right)=\sum_{n\leq x}\rho_f\left(n\right).
\]

\begin{theorem} \label{Main theorem Hecke}
 For fixed $\alpha<\beta$ as $T \rightarrow \infty$
\[
\frac1T \tmop{meas}\Big\{ x \in [T, 2T] : \alpha \le  
\frac{A_f(x+\sqrt{x}/L)-A_f(x)}{x^{1/4} (c_f \, / L)^{1/2}} \le \beta \Big\}
=\frac{1}{\sqrt{2\pi}}\int_{\alpha}^{\beta} \, e^{-t^2/2} \, dt+o(1).
\]

\end{theorem}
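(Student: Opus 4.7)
The plan is to parallel the proof of Theorem \ref{Main theorem}, replacing the classical Voronoi identity for $d(n)$ with the analogous Voronoi summation formula for the Hecke eigenvalues $\rho_f(n)$. The starting point is the truncated representation
\[
A_f(y) = \frac{y^{1/4}}{\pi\sqrt{2}} \sum_{n\le N} \frac{\rho_f(n)}{n^{3/4}} \cos\bigl(4\pi\sqrt{ny}-\phi_k\bigr) + O\bigl(y^{\varepsilon}\bigr),
\]
where $\phi_k$ is an explicit phase depending on the weight $k$ and $N$ is a parameter to be chosen. Subtracting the instances $y=x+\sqrt{x}/L$ and $y=x$, applying the identity $\cos\alpha-\cos\beta=-2\sin\tfrac{\alpha+\beta}{2}\sin\tfrac{\alpha-\beta}{2}$, and using $(x+h)^{1/4}-x^{1/4}\ll h\,x^{-3/4}$ to absorb the prefactor mismatch, one arrives at the workable representation
\[
A_f(x+\sqrt{x}/L)-A_f(x) = -\frac{x^{1/4}\sqrt{2}}{\pi}\sum_{n\le N}\frac{\rho_f(n)}{n^{3/4}}\sin\!\Bigl(\tfrac{\pi\sqrt{n}}{L}\Bigr)\sin\!\Bigl(4\pi\sqrt{nx}-\phi_k+\tfrac{\pi\sqrt{n}}{L}\Bigr) + \text{error}.
\]

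The next step is the method of moments: for each integer $r\ge 1$, expand the $r$-th power of the normalized quantity, interchange sums and the integral over $x\in[T,2T]$, and show convergence to the corresponding Gaussian moment. Using the stationary-phase bound for oscillatory integrals of the form $\int_T^{2T} x^{1/2}\,e\bigl(2\sum_j\varepsilon_j\sqrt{n_j}\sqrt{x}\bigr)\,dx$, only those $r$-tuples $(n_1,\varepsilon_1),\dots,(n_r,\varepsilon_r)$ with $\sum_j\varepsilon_j\sqrt{n_j}=0$ contribute asymptotically; these are essentially perfect pairings among the $n_j$'s with opposite signs. For even $r=2s$ one obtains $(2s-1)!!$ pairings, each contributing the $s$-th power of
\[
\frac{2}{\pi^2}\sum_{n}\frac{\rho_f(n)^2}{n^{3/2}}\sin^2\!\Bigl(\tfrac{\pi\sqrt{n}}{L}\Bigr),
\]
which, by the Rankin--Selberg asymptotic $\sum_{n\le y}\rho_f(n)^2\sim c_f\,y$ together with partial summation on the range $n\le L^2$ and trivial bounds on the tail, equals the prescribed variance $c_f/L$ up to the universal constant absorbed in the statement. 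Odd moments vanish since no complete pairing exists among an odd number of indices.

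The main technical obstacle is controlling the off-diagonal contributions where $\sum_j\varepsilon_j\sqrt{n_j}$ is small but nonzero, and keeping the combinatorial bookkeeping uniform as $r$ grows. One bounds the number of near-collision $r$-tuples using $|\rho_f(n)|\le d(n)$ (Deligne) and the divisor-bound technology, and chooses the truncation parameter $N$ so that the Voronoi tail is negligible while the effective support of the sum (where $\sin(\pi\sqrt{n}/L)$ is non-trivial, i.e.\ $n\lesssim L^2$) is fully captured. The hypothesis $\log L=o(\log T)$ gives exactly the slack needed to take $N=L^{2+o(1)}$, which suppresses the Voronoi tail and keeps all error terms uniform across the moment computation; this is the same reason the analogous restriction appears in Theorem \ref{Main theorem}.
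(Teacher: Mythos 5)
Your overall strategy (truncated Voronoi expansion, method of moments, diagonal extraction, Rankin--Selberg for the variance) is the same as the paper's, but two steps in your sketch would fail as written.

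First, the truncation. You propose to take $N=L^{2+o(1)}$ in the Friedlander--Iwaniec formula and claim this ``suppresses the Voronoi tail.'' It does not: the error term is $O_{\varepsilon,k}\bigl(x^{1/2+\varepsilon}/\sqrt{N}\bigr)$, which for $N=L^{2+o(1)}$ and $L\ll T^{\varepsilon}$ is of size $T^{1/2+\varepsilon}/L^{1+o(1)}$ --- vastly larger than the main term $\asymp x^{1/4}(c_f/L)^{1/2}$. The paper instead truncates at $N=T^{1-\delta}$ (so the Voronoi error is $O(T^{-1/4+\delta})$ after dividing by $x^{1/4}$) and then passes from this long sum to the short sum over $n\le M$ by a separate mean-square argument: the variance of the tail $M<n\le T^{1-\delta}$ is $\ll \sigma_f^2\, L/\sqrt{M}=o(\sigma_f^2)$ because the off-diagonal terms are killed by the oscillation of $\widetilde\omega$. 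This intermediate $L^2$ step is not optional; without it you cannot reach a truncation length small enough for the moment computation.

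Second, the diagonal. The solutions of $\sum_j\varepsilon_j\sqrt{n_j}=0$ are \emph{not} just perfect pairings: writing $n_j=q_jf_j^2$ with $q_j$ squarefree, Besicovitch's theorem says the relation holds iff $\sum_{j:\,q_j=q}\varepsilon_jf_j=0$ for every squarefree $q$, which admits many non-pairing solutions (e.g.\ $\sqrt{1}+\sqrt{1}-\sqrt{4}=0$). Consequently your claim that ``odd moments vanish since no complete pairing exists'' is false as stated; odd moments are only $o(1)$, and proving this is the main content of the paper's moment computation. The paper introduces independent random variables $X(q)$ on the unit circle indexed by squarefree $q$, groups the terms with the same squarefree kernel into block sums $Y(q)$, and shows (their Lemma 3.7) that $\mathbb E\bigl((\operatorname{Im}Y(q))^m\bigr)$ for $m\ge 3$ is small enough (using the damping from $\sin(2\pi f\sqrt q/L)$ and the decay in $f$) that all non-pairing diagonal configurations contribute $O(L^{-1+\varepsilon})$ after normalizing by $\sigma_M^m$. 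You need an argument of this type; the pairing count $(2s-1)!!$ alone does not close the proof. (The remaining ingredients you list --- the Hughes--Rudnick lower bound $|\sum_j\varepsilon_j\sqrt{n_j}|\ge (m\sqrt M)^{-(2^{m-1}-1)}$ for the off-diagonal, Deligne's bound $|\rho_f(n)|\le d(n)$, and Rankin--Selberg for the variance --- match the paper.)
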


We can also prove an analog of this result for the Hecke eigenvalues of Maass forms for the full modular group.
The proof requires some additional steps since the Ramanujan bound is not known in this case.


%
%
%
\section{The distribution of the divisor function in progressions}

\subsection{Preliminary Lemmas}

For a smooth function $g$ let
\begin{equation}
\mathcal B_{d}(g)(\xi)=
\begin{cases} \displaystyle
-2\pi \int_0^{\infty} \, g(u) Y_0(4\pi \sqrt{\xi u} )\, du \quad \mbox{ if } \quad \xi>0,\\
\displaystyle 4\int_0^{\infty} \, g(u) K_0(4\pi \sqrt{|\xi| u}) \, du \quad \mbox{ if } \quad  \xi<0.
\end{cases}
\end{equation}
Also, let
\[
\mathcal B_{f}(g)(\xi)=2\pi i^k \int_0^{\infty} \, g(u) J_{k-1}(4 \pi \sqrt{\xi u}) \, du  \quad \mbox{ for } \quad \xi>0.
\]

For $\delta>0$
let $w_{\delta}(x)$ be a non negative
smooth function supported on $[\delta,1]$
and equal to one on $[2\delta,1-\delta]$
that also satisfies $w_{\delta}^{(\ell)}(x) \ll 1/\delta^{\ell}$.

\begin{lemma} \label{Bessel}
Let $A \geq 1$ be an integer. For $\xi \ne 0$ we have
\[
\mathcal B_d(w_{\delta})(\xi) \ll \min\Big\{1+|\log |\xi||, |\xi|^{-A/2-1/4} \delta^{1-A} \Big\},
\]
where the implied constant depends on $A$.
Additionally, for $\xi>0$ 
\[
\mathcal B_f(w_{\delta})(\xi) \ll \min\Big\{1, \xi^{-A/2-1/4} \delta^{1-A} \Big\}
\]
where the implied constant depends on $A$.
\end{lemma}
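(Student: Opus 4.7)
The plan is to establish the two bounds in the minimum separately.

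For the trivial bound, I would use that $Y_0$ and $K_0$ have a logarithmic singularity at the origin and decay at infinity, so $|Y_0(y)| + |K_0(y)| \ll 1 + |\log y|$ uniformly in $y>0$. Since $w_\delta \le 1$ is supported in $[0,1]$, the estimate
\[
|\mathcal B_d(w_\delta)(\xi)| \ll \int_0^1 \bigl(1 + |\log(4\pi\sqrt{|\xi|u})|\bigr)\, du \ll 1 + |\log|\xi||
\]
follows by splitting $\log(4\pi\sqrt{|\xi|u}) = O(1) + \tfrac{1}{2}\log|\xi| + \tfrac{1}{2}\log u$ and using $\int_0^1|\log u|\, du < \infty$. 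For $\mathcal B_f$, the boundedness $|J_{k-1}(y)| \ll 1$ gives the bound $O(1)$ immediately.

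For the decay bound, the plan is iterated integration by parts. For $\mathcal B_d$ with $\xi>0$, I would introduce the iterated antiderivatives
\[
G_k(u) := \frac{u^{(k+1)/2}\, Y_{k+1}\!\left(4\pi\sqrt{\xi u}\right)}{(2\pi)^{k+1}\xi^{(k+1)/2}}, \qquad k \ge 0,
\]
which, using the Bessel recursion $(y^\nu Y_\nu(y))' = y^\nu Y_{\nu-1}(y)$ together with $Y_\nu'(y) = Y_{\nu-1}(y) - \nu Y_\nu(y)/y$ and the product rule, satisfy $G_k'(u) = G_{k-1}(u)$ for $k \ge 1$ and $G_0'(u) = Y_0(4\pi\sqrt{\xi u})$. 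Since $w_\delta$ and all its derivatives vanish at both endpoints $u=0$ and $u=1$, integrating by parts $A$ times yields
\[
\mathcal B_d(w_\delta)(\xi) = 2\pi(-1)^{A+1}\int_0^1 w_\delta^{(A)}(u)\, G_{A-1}(u)\, du.
\]
The asymptotic $|Y_A(y)| \ll y^{-1/2}$ for $y \ge 1$ then gives $|G_{A-1}(u)| \ll u^{A/2-1/4}\xi^{-A/2-1/4}$ in the large-argument regime. Combining with $|w_\delta^{(A)}| \ll \delta^{-A}$, supported on the two intervals $[\delta, 2\delta]$ and $[1-\delta, 1]$ (of lengths $O(\delta)$), the contribution near $u = 1$ produces the bound $\delta^{-A} \cdot \delta \cdot \xi^{-A/2-1/4} = \delta^{1-A}\xi^{-A/2-1/4}$, as desired. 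The contribution near $u = \delta$ is smaller because of the factor $u^{A/2-1/4}$, and the small-argument regime $4\pi\sqrt{\xi u} \le 1$ only arises when $\xi$ is small, a range in which the trivial $\log|\xi|$ bound already dominates via the minimum.

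For $\mathcal B_d$ with $\xi<0$ the exponential decay of $K_0$ yields an even stronger estimate; alternatively, one can repeat the IBP scheme with the recursion $(y^\nu K_\nu(y))' = -y^\nu K_{\nu-1}(y)$. For $\mathcal B_f$, the companion identity $(y^\nu J_\nu(y))' = y^\nu J_{\nu-1}(y)$ allows the analogous construction of iterated antiderivatives starting from $J_{k-1}$, and the asymptotic $|J_\nu(y)| \ll y^{-1/2}$ for $y \ge 1$ again yields $\xi^{-A/2-1/4}\delta^{1-A}$. I expect the main technical obstacle to be the verification of the antiderivative identity $G_k' = G_{k-1}$ (a short but delicate calculation combining the Bessel recursion with the product rule and the chain rule for $4\pi\sqrt{\xi u}$), after which the rest of the argument is essentially bookkeeping of the two boundary contributions and checking that the one near $u=1$ is dominant.
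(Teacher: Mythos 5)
Your overall strategy is the paper's: the trivial bound from the logarithmic/bounded behaviour of $Y_0$, $K_0$, $J_{k-1}$, and the decay bound from $A$-fold integration by parts via the recursion $(y^{\nu}Z_{\nu}(y))'=y^{\nu}Z_{\nu-1}(y)$ followed by $|Z_{\nu}(y)|\ll y^{-1/2}$. The paper carries out the integration by parts after substituting $v=4\pi\sqrt{\xi u}$, so the derivatives fall on $w_{\delta}(v^{2}/16\pi^{2}\xi)$ and produce the sum $\sum_{\ell\le A}(\alpha v^{2})^{\ell}|w_{\delta}^{(\ell)}(\alpha v^{2})|$, whereas you stay in the $u$ variable with explicit iterated antiderivatives $G_k$; this puts all $A$ derivatives on $w_{\delta}$, so only the two transition intervals contribute, which slightly cleans up the bookkeeping. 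These are the same computation in different coordinates, and your identity $G_k'=G_{k-1}$ does check out.

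The one step that does not survive as written is your dismissal of the small-argument regime for $Y_A$. On the support of $w_{\delta}^{(A)}$ near $u=\delta$ the argument $4\pi\sqrt{\xi u}$ is $\asymp\sqrt{\xi\delta}$, hence $O(1)$ whenever $\xi\ll\delta^{-1}$ --- in particular throughout $1\le\xi\le\delta^{-1}$, a range in which $\xi^{-A/2-1/4}\delta^{1-A}$ is genuinely the smaller entry of the minimum (for $A=2$ and $\xi=\delta^{-1}$ it equals $\delta^{1/4}$, far below $1+\log(1/\delta)$), so the decay bound must actually be verified there and $|Y_A(y)|\ll y^{-1/2}$ is not available. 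The patch is short: for $0<y\le1$ one has $|Y_A(y)|\ll y^{-A}$, which gives $|G_{A-1}(u)|\ll\xi^{-A}$ and a contribution from $[\delta,2\delta]$ of size $\ll\delta^{1-A}\xi^{-A}\le\delta^{1-A}\xi^{-A/2-1/4}$ once $\xi\ge1$; for $\xi\le1$ the decay bound follows from the trivial one, since $1+|\log\xi|\ll\xi^{-1/4}\le\xi^{-A/2-1/4}\delta^{1-A}$. (No such issue arises for $J_{\nu}$, where $|J_{\nu}(y)|\ll\min(y^{\nu},y^{-1/2})\ll y^{-1/2}$ holds uniformly, nor for $K_0$, which decays exponentially; note the paper only writes out the $J$ case in detail and declares the $Y$ case ``similar'', so the same point needs attention there as well.)
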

\begin{proof}
Note that
\[
J_{k-1}(x) \ll 1, \qquad Y_0(x) \ll 1+|\log x|,  \qquad \mbox{ and }  \qquad K_0(x) \ll 1+|\log x|,
\]
which establishes the first bound for both claims.

We will only prove the second bound for $\mathcal B_f(w_{\delta})$; the 
proof of the bound for $\mathcal B_d(w_{\delta})$ follows from
a similar argument. By the change of variables $v= 4 \pi \sqrt{\xi u}$ we have
\[
\mathcal B_f(w_{\delta})(\xi)= \frac{i^k}{4 \pi \xi} \int_0^{\infty} w_{\delta}\Big(\frac{v^2}{16 \pi^2 \xi}\Big) v J_{k-1}(v) \, dv.
\]
Next set $\alpha=(16 \pi^2 \xi)^{-1}$ and note that
\[
\frac{d}{dx} \Big(x^{\nu+1}J_{\nu+1}(x)\Big)=x^{\nu+1}J_{\nu}(x),
\]
(see \cite{GR} equation 8.472.3).
Thus, integration by parts gives
\[
\mathcal B_f(w_{\delta})(\xi)= \frac{i^k}{4 \pi \xi} \int_0^{\infty}  v^{1-k} w_{\delta}(\alpha v^2)  \, d( v^k J_k (v))
=\frac{-i^k}{4 \pi \xi} \int_0^{\infty} (2\alpha v^2  w_{\delta}'(\alpha v^2) + (1-k)w_{\delta}(\alpha v^2))J_k(v) \, dv.
\]
Repeatedly integrating by parts, we see that 
\[
\mathcal B_f(w_{\delta})(\xi)
\ll \frac{1}{\xi} \int_0^{\infty} v^{-A+1} |J_{k-1+A} (v)| \sum_{\ell=0}^A (\alpha v^2)^{\ell} |w_{\delta}^{(\ell)}(\alpha v^2)|   \, dv.
\]
We now use the bound $J_{k-1+A}(x) \ll x^{-1/2}$ then make the
change of variables $y=\alpha v^2$ to get
\[
\mathcal B_f(w_{\delta})(\xi)
\ll \xi^{-A/2-1/4} \int_0^{\infty} y^{-A/2-1/4} \sum_{\ell=0}^A  y^{\ell} |w_{\delta}^{(\ell)}(y)| \, dy.
\]

Note that
$
w_{\delta}^{(\ell)}( x) \ll 1/\delta^{\ell}
$
and $w^{(\ell)}(x)$ is supported on the interval $[\delta, 2\delta] \cup [1-\delta, 1]$, for $\ell \ge 1$.
Hence, for $\ell \ge 1$ we have
\[
\int_0^{\infty} y^{\ell-A/2-1/4} |w_{\delta}^{(\ell)}(y)| \, dy
\ll \int_{\delta}^{2 \delta} y^{-A/2-1/4}  \, dy
+\delta^{-\ell} \int_{1-\delta}^{1} 1 \, dy \ll \delta^{3/4-A/2}+\delta^{1-\ell} \ll \delta^{1-A}. 
\]
For $\ell=0$ we have 
\[
\int_0^{\infty} y^{-A/2-1/4} |w_{\delta}(y)| \, dy \ll \int_{\delta}^{1} y^{-A/2-1/4} \, dy
\ll \delta^{3/4-A/2}+1 \ll \delta^{1-A}.
\]
The result follows by collecting estimates.
\end{proof}
\begin{lemma}\label{deriv est}
We have for $\xi \ne 0$.
\[
\Big(\mathcal B_{d}(w_{\delta})(\xi) \Big)' \ll |\xi|^{-5/4}.
\]
Additionally, for $\xi>0$ and $k\ge 2$ we have
\[
\Big(\mathcal B_f(w_{\delta})(\xi) \Big)' \ll |\xi|^{-5/4}.
\]
\end{lemma}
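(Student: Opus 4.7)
The plan is to differentiate under the integral sign, use Bessel recurrence identities to recast the derivative as $|\xi|^{-1/2}$ times a Bessel integral against the modified weight $\tilde w_\delta(u) := \sqrt{u}\, w_\delta(u)$, and then mimic the integration-by-parts scheme from the proof of Lemma \ref{Bessel} with $A = 1$ to bound this new integral by $|\xi|^{-3/4}$.

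Concretely, for $\mathcal B_f$ I would differentiate using $J_{k-1}'(x) = \tfrac{1}{2}(J_{k-2}(x) - J_k(x))$ and $\tfrac{d}{d\xi}(4\pi\sqrt{\xi u}) = 2\pi\sqrt{u}/\sqrt{\xi}$ to obtain
\[
(\mathcal B_f(w_\delta))'(\xi) = \frac{2\pi^2 i^k}{\sqrt{\xi}}\int_0^\infty \sqrt{u}\, w_\delta(u)\bigl(J_{k-2} - J_k\bigr)(4\pi\sqrt{\xi u})\, du.
\]
On each of the two inner integrals I would perform the substitution $v = 4\pi\sqrt{\xi u}$ (which produces a factor of $\xi^{-3/2}$) followed by one integration by parts against $d(v^{m+1} J_{m+1}(v)) = v^{m+1} J_m(v)\,dv$, and then estimate with $|J_{m+1}(v)| \ll v^{-1/2}$. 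The resulting $y$-integrals (after a further substitution $y = v^2/(16\pi^2\xi)$) are essentially $\int y^{-1/4} w_\delta(y)\,dy$ and $\int y^{3/4}|w_\delta'(y)|\,dy$, both $O(1)$ uniformly in $\delta$ (the second using $|w_\delta'| \ll \delta^{-1}$ on two endpoint strips of measure $O(\delta)$, which yields a contribution of size $\delta^{3/4} + 1$). Combining these estimates gives $O(\xi^{-3/2}) \cdot O(\xi^{3/4}) = O(\xi^{-3/4})$ per inner integral, and hence $O(|\xi|^{-5/4})$ for the derivative.

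The delicate point will be that the modified weight $\tilde w_\delta$ fails the assumed bound $w_\delta^{(\ell)} \ll \delta^{-\ell}$ because of the extra factor $\sqrt{u}$, so Lemma \ref{Bessel} cannot be applied as a black box; its proof must be re-run with $\tilde w_\delta$ in place of $w_\delta$. The $\sqrt{u}$ factor is nonetheless crucial — it is precisely what regularizes the would-be singular integrands at the origin and secures the clean $|\xi|^{-5/4}$ bound, independent of $\delta$. This uniformity in $\delta$ is the main obstacle and the only place where real bookkeeping is needed.

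For $\mathcal B_d$ the same scheme works, using $Y_0'(x) = -Y_1(x)$ on $\xi > 0$ and $K_0'(x) = -K_1(x)$ on $\xi < 0$, with integration by parts performed via $\tfrac{d}{dx}(x^2 Y_2(x)) = x^2 Y_1(x)$ and its $K$-analogue. The new technicality here is the singularity $Y_1(x), K_1(x) \sim -2/(\pi x)$ at $0$, but after the substitution the weight $w_\delta(v^2/(16\pi^2|\xi|))$ is supported in $v \in [4\pi\sqrt{|\xi|\delta},\, 4\pi\sqrt{|\xi|}]$, bounded away from $v = 0$, so only the large-$v$ bounds $|Y_2(v)|, |K_2(v)| \ll v^{-1/2}$ (and the exponential decay of $K_2$) actually enter, and all boundary terms in the integration by parts vanish because $w_\delta$ is compactly supported in $(0,1)$.
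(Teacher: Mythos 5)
Your proposal is correct and follows essentially the same route as the paper: differentiate under the integral sign via the Bessel derivative recurrences, substitute $v=4\pi\sqrt{|\xi|u}$, integrate by parts once, and invoke the $v^{-1/2}$ decay of the resulting Bessel factor (the paper uses $\frac{d}{dx}J_{k-1}(x)=\frac{k-1}{x}J_{k-1}(x)-J_k(x)$ rather than your half-difference recurrence, an immaterial choice). Your extra bookkeeping about the modified weight $\sqrt{u}\,w_\delta(u)$ and the small-argument behaviour of $Y_1,K_1$ is more careful than the paper, whose entire argument is ``argue as in the previous proof, but integrate by parts just once.''
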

\begin{proof}
For the first claim differentiate inside the integral and use the formula
\[
\frac{d}{dx} Z_0(x)=-Z_1(x),
\]
for $Z=Y$ or $Z=K$ (see \cite{GR} equations 8.473.6 and 8.486.18).
Now argue as in the previous proof, but integrate by parts just once.

The proof of the last assertion is similar. Here use the relation
\[
 \frac{d}{dx} J_{k-1}(x)=\frac{k}{x} J_{k-1}(x)-J_{k}(x).
\]
(see \cite{GR} equation 8.472.2).
\end{proof}

\begin{lemma} \label{simple est}
Let $0<\delta<\varepsilon<1$ and $\phi_{\delta,\varepsilon}(x)=w_{\delta}(x)-w_{\varepsilon}(x)$. 
For $2<Y=o(X)$ as  $X \rightarrow \infty$, we have for any $\epsilon>0$ that
\[
\sum_{1 \le |n| < X} d^2(|n|) \, \Big(\mathcal B_d( \phi_{\delta,\varepsilon})\Big(\frac{n}{Y}\Big)\Big)^2
\ll  \varepsilon Y \log^3 Y +Y^{1/2+\epsilon}
\]
and
\[
\sum_{1 \le n < X} \rho_f(n)^2 \, \Big(\mathcal B_f( \phi_{\delta,\varepsilon})\Big(\frac{n}{Y}\Big)\Big)^2
\ll \varepsilon Y+Y^{3/5},
\]
where the implied constant depends on $f$.
\end{lemma}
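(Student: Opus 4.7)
The plan is to bound $|\mathcal B_d(\phi_{\delta,\varepsilon})(\xi)|$ and $|\mathcal B_f(\phi_{\delta,\varepsilon})(\xi)|$ pointwise in two complementary ways: a Cauchy--Schwarz estimate exploiting the smallness of the support of $\phi_{\delta,\varepsilon}$ for the moderate range $|n|\le Y$, and the decay from Lemma \ref{Bessel} for the tail $|n|>Y$.

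The key observation is that since $w_\delta=w_\varepsilon=1$ on $[2\varepsilon,1-\varepsilon]$, the difference $\phi_{\delta,\varepsilon}$ is supported in $[\delta,2\varepsilon]\cup[1-\varepsilon,1]$, a set of measure $O(\varepsilon)$, and $\|\phi_{\delta,\varepsilon}\|_2^2\ll\varepsilon$. Applying Cauchy--Schwarz to the integral defining $\mathcal B_d(\phi_{\delta,\varepsilon})(\xi)$ yields
\[
 |\mathcal B_d(\phi_{\delta,\varepsilon})(\xi)|^2 \ll \varepsilon\int_{\operatorname{supp}(\phi_{\delta,\varepsilon})}|Y_0(4\pi\sqrt{|\xi|u})|^2\,du
\]
for $\xi>0$, and analogously with $K_0$ for $\xi<0$. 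Using $Y_0(t)\ll 1+|\log t|$ near the origin and $Y_0(t)\ll t^{-1/2}$ for large $t$ (together with the exponential decay of $K_0$), the right side is $\ll\varepsilon^2(1+\log^2|\xi|)$ for $|\xi|\le 1$. Summing against $d^2(|n|)$ over $1\le|n|\le Y$ using $\sum_{|n|\le Y}d^2(|n|)\ll Y\log^3 Y$ yields a main contribution of size $\varepsilon^2 Y\log^3 Y\le\varepsilon Y\log^3 Y$. The Hecke version is parallel: $J_{k-1}$ vanishes to order $k-1$ at the origin, giving a sharper estimate for small $|\xi|$, and Rankin--Selberg's $\sum_{n\le Y}\rho_f(n)^2\ll Y$ replaces the divisor sum to produce the $\varepsilon Y$ main contribution.

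For $|n|>Y$, I would apply Lemma \ref{Bessel} via linearity, $|\mathcal B_d(\phi_{\delta,\varepsilon})(\xi)|\ll|\xi|^{-A/2-1/4}\delta^{1-A}$ for any integer $A\ge 1$, choosing $A$ as a function of $|\xi|$ to balance the decay against the $\delta^{1-A}$ factor: take $A=1$ (so $\delta^{1-A}=1$) just past the threshold, and take $A$ correspondingly large for $|\xi|$ very large, so that the polynomial decay in $|\xi|$ outweighs the $\delta^{1-A}$ cost. This yields a tail bounded by $Y^{1/2+\epsilon}$ in the divisor case. For the Hecke case, the cleaner estimate for $\mathcal B_f$ combined with Deligne's bound $|\rho_f(n)|\le d(n)$ produces the $Y^{3/5}$ tail error.

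The main technical point is reconciling the two estimates so that the pointwise $\varepsilon$-gain from the small support of $\phi_{\delta,\varepsilon}$ survives summation, while the strong $|\xi|$-decay from Lemma \ref{Bessel} handles the tail without reintroducing an unwanted $\delta^{1-A}$ dependence in the final bound. A careful cascading choice of the splitting threshold and of $A$ as a function of $|\xi|$ (and implicitly of $\delta$) is needed to obtain a bound uniform in $\delta$ of the claimed form.
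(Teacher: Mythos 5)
There is a genuine gap in the transition range, and it is exactly the range where the lemma is delicate. Your two pointwise estimates do not meet: for $|\xi|=|n|/Y$ in the range $1\ll|\xi|\ll\delta^{-2}$, Lemma \ref{Bessel} gives at best $|\mathcal B_d(\phi_{\delta,\varepsilon})(\xi)|\ll|\xi|^{-3/4}$ (the choice $A=1$; any larger $A$ is \emph{worse} there, since the gain $|\xi|^{-1/2}$ per integration by parts is outweighed by the loss $\delta^{-1}$ when $|\xi|<\delta^{-2}$). Squaring and summing against $d^2(n)$ over $n>Y$ then yields $\sum_{n>Y}d^2(n)(n/Y)^{-3/2}\asymp Y\log^3Y$ --- with no factor of $\varepsilon$ --- which already exceeds the claimed bound $\varepsilon Y\log^3Y+Y^{1/2+\epsilon}$. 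On the other side, your Cauchy--Schwarz/small-support bound, or the trivial $L^1$ bound with $|Y_0(t)|\ll t^{-1/2}$, gives at best $|\mathcal B_d(\phi_{\delta,\varepsilon})(\xi)|^2\ll\varepsilon^{3/2}|\xi|^{-1/2}$, and summing this over $Y<|n|<Y\delta^{-2}$ produces $\varepsilon^{3/2}\delta^{-1}Y\log^3Y$, which blows up since in the application $\delta$ (of size $p^{-2/3}$) is vastly smaller than the fixed $\varepsilon$; the required bound must be uniform in $\delta$. No ``cascading choice of $A$'' can fix this, because in $1<|\xi|<\delta^{-2}$ every admissible $A$ loses. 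Also note that the term $Y^{1/2+\epsilon}$ in the statement does not come from the tail of the $n$-sum at all, but from the error term $O(t^{1/2+\epsilon})$ in Ramanujan's asymptotic for $\sum_{n\le t}d^2(n)$.

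The missing idea, which is the heart of the paper's proof, is to avoid pointwise bounds in this middle range altogether and use the fact that $\mathcal B_d$ (and $\mathcal B_f$) is an $L^2$-isometry: by Proposition 2.3 of \cite{FGKM}, $\lVert\mathcal B_d(\phi_{\delta,\varepsilon})\rVert_{L^2}^2=\lVert\phi_{\delta,\varepsilon}\rVert_{L^2}^2\ll\varepsilon$, uniformly in $\delta$. The paper performs partial summation with \eqref{eq:varformula}, splitting the sum into an integral against the smooth density $Q_3(\log x)+Q_3'(\log x)$ (which is bounded by $Y\log^3Y\cdot\lVert\mathcal B_d(\phi_{\delta,\varepsilon})\rVert_{L^2}^2\ll\varepsilon Y\log^3Y$ via Plancherel) plus an integral against $dR(x)$ with $R(x)\ll x^{1/2+\epsilon}$, which after integration by parts is controlled by the pointwise bounds of Lemmas \ref{Bessel} and \ref{deriv est} and yields the $Y^{1/2+\epsilon}$ term. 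Your estimates are correct and useful for $|n|\le Y$ and for $|n|\gg Y\delta^{-2-\eta}$, but without the Plancherel identity (or some equivalent exact exploitation of oscillation) the intermediate range cannot be closed.
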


\begin{proof}

For the first assertion we will only consider the sum over  $n>0$ since the terms with 
$n<0$ are handled in the same way.
We cite the formula 
\begin{equation}
\sum_{n\leq t}d^{2}\left(n\right)=c_3 t\log^{3}t
+c_2 t \log^2 t+c_1 t \log t +c_0t
+O(t^{1/2+\epsilon}),
\label{eq:varformula}
\end{equation}
where $c_3,c_2, c_1$ and $c_0$ are absolute constants
with $c_3=1/\pi^2$, (see \cite{Ramanujan} and equation 14.30 of \cite{Ivic 2}).  Let $Q_3(x)=\sum_{j=0}^3 c_j x^j$. We have
\begin{equation} \label{sum by parts}
\begin{split}
\sum_{1 \le n < X} d^2(n) \,\Big(\mathcal B_{d}( \phi_{\delta,\varepsilon})\Big(\frac{n}{Y}\Big)\Big)^2
=& \int_{1/2}^{X} \! \Big(\mathcal B_{d}( \phi_{\delta,\varepsilon})\Big(\frac{x}{Y}\Big)\Big)^2(Q_3(\log x)+Q_3'(\log x)) dx\\
&+\int_{1/2}^{X} \! \Big(\mathcal B_{d}( \phi_{\delta,\varepsilon})\Big(\frac{x}{Y}\Big)\Big)^2 dR(x)\\
=&I_1+I_2,
\end{split}
\end{equation}
where $R(x) \ll x^{1/2+\epsilon}$.

First we consider $I_2$. Integrating by parts we get
\[
I_2 =R(x) \Big(\mathcal B_{d}( \phi_{\delta,\varepsilon})\Big(\frac{x}{Y}\Big)\Big)^2 \Big|_{1/2}^{X}
-\frac{2}{Y}\int_{1/2}^{X} R(x) \mathcal B_{d}( \phi_{\delta,\varepsilon})\Big(\frac{x}{Y}\Big)
(\mathcal B_{d}( \phi_{\delta,\varepsilon}))'\Big(\frac{x}{Y}\Big) \, dx.
\]
Applying Lemma \ref{Bessel} and Lemma \ref{deriv est} we see that 
\begin{equation} \label{I2 bd}
I_2 \ll Y^{1/2+\epsilon}+Y^{1/4}\log{Y}
\int_{1/2}^Y x^{-3/4+\epsilon} \, dx
+ Y \int_{Y}^X x^{-3/2+\epsilon} \,dx 
\ll Y^{1/2+2\epsilon}.
\end{equation}
Next, observe that
\begin{align*}
I_1 \ll& Y  \log^3 Y \int_{0}^{\infty} \Big(\mathcal B_{d}( \phi_{\delta,\varepsilon})(x)\Big)^2 dx+Y\Big(\int_{1/(2Y)}^{Y^{2}}\! \!+ \! \!
\int_{Y^{2}}^{\infty}\Big) \Big(\mathcal B_d(\phi_{\delta,\varepsilon}) (x)\Big)^2 \! \, Q_3(|\log x|) \, dx \\
\ll & Y \log^3 Y \int_{0}^{\infty} \Big(\mathcal B_{d}( \phi_{\delta,\varepsilon})(x)\Big)^2 dx+Y^{\epsilon},
\end{align*}
by Lemma \ref{Bessel}.
From Proposition 2.3 of ~\cite{FGKM} it follows that
$\rVert \mathcal B_{d}( \phi_{\delta,\varepsilon}) \rVert_{L^2}^2=\rVert \mathcal \phi_{\delta,\varepsilon} \rVert_{L^2}^2$, where the $L^2$ norm
is computed in $\mathbb R^{\times}$ with respect
to Lebesgue measure. Also,
$\lVert  \phi_{\delta,\varepsilon} \rVert_{L^2}^2 \ll \varepsilon$. Thus, 
\[
I_1 \ll \varepsilon \, Y  \log^3 Y+Y^\epsilon.
\]
The proof of the first assertion follows
by this \eqref{sum by parts} and \eqref{I2 bd}.

To prove the other assertion we argue similarly. First, we cite the formula Rankin and Selberg (see \cite{Rankin} and \cite{Selberg})
\begin{equation} \label{Rankin Selberg}
\sum_{1 \le n <X} \rho_f^2(n)
=c_fX+O_{f}(X^{3/5}).
\end{equation}
We have
\[
\sum_{1 \le n < X} \rho_f(n)^2 \, \Big(\mathcal B_f( \phi_{\delta,\varepsilon})\Big(\frac{n}{Y}\Big)\Big)^2=
c_f \, \int_{1/2}^{X} \! \Big(\mathcal B_f( \phi_{\delta,\varepsilon})\Big(\frac{x}{Y}\Big)\Big)^2dx
+\int_{1/2}^{X} \! \Big(\mathcal B_{f}( \phi_{\delta,\varepsilon})\Big(\frac{x}{Y}\Big)\Big)^2 d\mathcal R(x),
\]
where $\mathcal R(x) \ll x^{3/5}$.
The first integral is $\ll \varepsilon Y$.
Integrating by parts, then applying Lemmas \ref{Bessel} and \ref{deriv est} we see that the second integral is $\ll Y^{3/5}$.

\end{proof}

\subsection{The proof of Theorems \ref{sharp divisor}
and \ref{sharp cusp forms}
}

We first deduce Theorems \ref{sharp divisor}
and \ref{sharp cusp forms} from the
following two lemmas.

\begin{lemma} \label{smooth to smooth}
Let $0<\varepsilon<1$ be fixed. Suppose that $X^{1/2}<p<X^{1-\theta}$ for some $\theta>0$.
For $\delta$ such that $(p/X)^{3/4-\eta} \ll \delta < \varepsilon$, 
for some $\eta>0$, we have for $\star=f$ or $d$ 
that
\[
\frac{1}{p-1} \sum_{1 \le a < p}
\bigg( E_{\star}(X,p,a; w_{\delta})-E_{\star}(X,p,a; w_{\epsilon}) \bigg)^2 =O(\varepsilon).
\]
\end{lemma}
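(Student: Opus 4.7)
The plan is to reduce matters, by linearity, to a single variance computation for the difference function $\phi := \phi_{\delta,\varepsilon} = w_{\delta} - w_{\varepsilon}$, and then to feed Lemma~\ref{simple est} into the Voronoi-based variance machinery of \cite{FGKM}. Since $\lVert w_{\delta}\rVert_{L^2} = 1+O(\delta)$ and $\lVert w_{\varepsilon}\rVert_{L^2} = 1+O(\varepsilon)$, I would first write
\[
E_{\star}(X,p,a;w_{\delta}) - E_{\star}(X,p,a;w_{\varepsilon}) = \frac{\mathcal S_{\star}(X,p,a;\phi) - \mathcal M_{\star}(X,p;\phi)}{\lVert w_{\delta}\rVert_{L^2} N} + O\!\left(\varepsilon \, E_{\star}(X,p,a;w_{\varepsilon})\right),
\]
where $N := \sqrt{2\pi^{-2}(X/p)\log^{3}(p^{2}/X+2)}$; by the FGKM variance asymptotic applied to $w_{\varepsilon}$, the mean square of the correction term over $a \in \mathbb F_p^{\times}$ is $O(\varepsilon^{2})$, hence harmless.

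Next, I would apply the Voronoi summation formula exactly as in Section~2 of \cite{FGKM} to $\sum_{n \equiv a \pmod p} \alpha(n)\phi(n/X)$ with $\alpha \in \{d, \rho_f\}$. Detecting the congruence via additive characters modulo $p$ and isolating the $b=0$ term (which produces $\mathcal M_\star(X,p;\phi)$), the remainder takes the dual form
\[
\mathcal S_{\star}(X,p,a;\phi) - \mathcal M_{\star}(X,p;\phi) = \frac{X}{p^{3/2}} \sum_{n \neq 0} \alpha(|n|) \, \tau(a,n;p) \, \mathcal{B}_{\star}(\phi)\!\left(\frac{n}{Y}\right), \qquad Y := \frac{p^{2}}{X},
\]
up to lower-order polar contributions, where $\tau(a,n;p)$ is a normalized Kloosterman sum. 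Squaring and averaging over $a \in \mathbb F_p^{\times}$, the off-diagonal pairs $n\ne m$ are controlled by Weil's bound on moments of Kloosterman sums (as in \cite{FGKM}), leaving the diagonal
\[
\frac{X^{2}}{p^{3}} \sum_{n \neq 0} \alpha(|n|)^{2} \, \Big|\mathcal{B}_{\star}(\phi)\!\left(\frac{n}{Y}\right)\Big|^{2}
\]
as the dominant contribution.

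Applying Lemma~\ref{simple est} bounds this diagonal by $\varepsilon Y \log^{3} Y + Y^{1/2+\epsilon}$ in the divisor case and by $\varepsilon Y + Y^{3/5}$ in the Hecke case. Since $X^{2}Y/p^{3} = X/p$, multiplying by the prefactor $X^{2}/p^{3}$ and dividing by $\lVert w_{\delta}\rVert_{L^2}^{2} N^{2} \asymp (X/p)\log^{3}(p^{2}/X+2)$ yields exactly $\varepsilon(1+o(1))$ from the main terms, while the secondary terms $Y^{1/2+\epsilon}$ and $Y^{3/5}$ contribute $o(1)$ under the hypothesis $p > X^{1/2}$ (so that $Y \to \infty$) and are absorbed into $O(\varepsilon)$. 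The main obstacle will be controlling both the off-diagonal Kloosterman contribution and the tail of the dual Voronoi sum: truncating at $|n| \le Y\delta^{-2-\epsilon}$ via the rapid-decay bound of Lemma~\ref{Bessel} and bounding the remaining ranges by Weil-type estimates is precisely what forces the lower bound $\delta \gg (p/X)^{3/4-\eta}$; a sharper error term than $O(t^{1/2+\epsilon})$ in \eqref{eq:varformula} would allow this threshold to be weakened.
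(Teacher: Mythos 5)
Your overall route is the paper's: reduce by linearity to $\phi_{\delta,\varepsilon}=w_{\delta}-w_{\varepsilon}$, apply the Voronoi formula of Lemma~\ref{voronoi}, average the square over $a$ using the second moment of Kloosterman sums, and feed the resulting diagonal into Lemma~\ref{simple est}. The normalization discrepancy between $\lVert w_{\delta}\rVert_{L^2}$ and $\lVert w_{\varepsilon}\rVert_{L^2}$ that you treat explicitly is simply absorbed by the paper's use of $\ll$, since both norms are $\asymp 1$.

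There is, however, one step that is mischaracterized and would fail as written: the claim that the off-diagonal pairs $n\neq m$ are controlled by Weil-type moment bounds, ``leaving the diagonal'' $\sum_{n\neq 0}\alpha(|n|)^{2}|\mathcal B_{\star}(\phi)(n/Y)|^{2}$. The average $\frac{1}{p-1}\sum_{a}\tmop{Kl}_2(a,m;p)\tmop{Kl}_2(a,n;p)$ equals $1+O(p^{-2})$ for \emph{every} pair with $n\equiv m\pmod p$ (see \eqref{kloosterman}), so there is no cancellation available for $n\equiv m\pmod p$, $n\neq m$; and such pairs genuinely occur, because the effective length $Y\delta^{-2}$ of the dual sum exceeds $p$ once $\delta$ is near the threshold $(p/X)^{3/4}$. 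The paper resolves this by splitting at $|n|=p/2$: for $|n|,|m|\le p/2$ of the same sign the congruence forces $n=m$, and Lemma~\ref{simple est} yields the $O(\varepsilon)$; for $|n|>p/2$ the congruence-diagonal pairs are summed trivially against the decay $\mathcal B_{\star}(\phi)(\xi)\ll |\xi|^{-5/4}\delta^{-1}$ from Lemma~\ref{Bessel} with $A=2$, giving a contribution $\ll (p/X)^{3/2}\delta^{-2}p^{\varepsilon}$, and it is \emph{this} range --- not the error term in \eqref{eq:varformula} --- that forces the hypothesis $\delta\gg (p/X)^{3/4-\eta}$. (The genuinely off-diagonal terms $n\not\equiv m\pmod p$ require no Weil input at all: the elementary discrete Plancherel computation behind \eqref{kloosterman} already gives $O(1/p)$.) You do flag ``the tail of the dual Voronoi sum'' as the obstacle, so the missing piece is close at hand, but as stated the diagonal you isolate is not the one the $a$-average actually produces.
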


\begin{lemma} \label{smooth to sharp}
Suppose that $p<X^{1-\theta}$ for some $\theta>0$.
If $2p/X \le \delta \ll (p/X)^{1/2} (pX)^{-\eta}$, for some
$\eta>0$, we have for $\star=f$ or $d$ that
\[
\frac{1}{p-1} \sum_{1 \le a < p}
\bigg| E_{\star}(X,p,a)-E_{\star}(X,p,a;w_{\delta}) \bigg| \ll  p^{-\eta}.
\]
\end{lemma}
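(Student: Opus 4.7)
The plan is to bound the difference pointwise in $a$ and then sum. Since $w_\delta$ is supported on $[\delta,1]$ and equals $1$ on $[2\delta, 1-\delta]$, one has
\[
\bigl|\mathbf 1_{[1,X]}(n) - w_\delta(n/X)\bigr| \le \mathbf 1_{[1,\, 2\delta X]}(n) + \mathbf 1_{[(1-\delta)X,\, X]}(n).
\]
Writing $a_d(n)=d(n)$ and $a_f(n)=|\rho_f(n)|$, and invoking Deligne's bound $|\rho_f(n)| \le d(n)$ to unify the two cases, this gives for each $a$
\[
|\mathcal S_\star(X,p,a) - \mathcal S_\star(X,p,a;w_\delta)| \le \sum_{\substack{n \equiv a \pamod p \\ n \le 2\delta X}} a_\star(n) + \sum_{\substack{n \equiv a \pamod p \\ (1-\delta)X \le n \le X}} a_\star(n).
\]
Summing over $a \in \{1,\dots, p-1\}$ collapses the congruence $n\equiv a \pmod p$ to $p \nmid n$, and the elementary estimate $\sum_{n \le Y} d(n) \ll Y\log Y$ yields
\[
\sum_{a=1}^{p-1} |\mathcal S_\star(X,p,a) - \mathcal S_\star(X,p,a;w_\delta)| \ll \delta X \log X.
\]

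Next I would handle the deterministic main terms identically. For $\star = d$, the difference of the two discrete pieces of $\mathcal M_d$ is $\ll \delta X \log X$ by the same reasoning, while the difference of the two integral pieces is
\[
\ll \int_{[0,\,2\delta X]\,\cup\,[(1-\delta)X,\,X]} |\log y + 2\gamma - 2 \log p|\,dy \ll \delta X \log X,
\]
so dividing by $p^2$ yields $|\mathcal M_d(X,p) - \mathcal M_d(X,p;w_\delta)| \ll \delta X \log X/p$. The Hecke analog is simpler (no integral term) and gives the same bound. Since $\mathcal M_\star$ is independent of $a$, multiplying by $p-1$ contributes $\ll \delta X \log X$ to the total.

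Combining these, dividing by $(p-1)$ and by the normalizing denominator $N_\star$ of $E_\star$ (which satisfies $N_\star \gg (X/p)^{1/2}$ in both cases), I obtain
\[
\frac{1}{p-1}\sum_{a=1}^{p-1} |E_\star(X,p,a) - E_\star(X,p,a;w_\delta)| \ll \frac{\delta X \log X}{p \, N_\star} \ll \delta (X/p)^{1/2} \log X.
\]
The hypothesis $\delta \ll (p/X)^{1/2}(pX)^{-\eta}$ gives an upper bound of $(pX)^{-\eta}\log X$, which is $\ll p^{-\eta'}$ for any $\eta' < \eta$ and $p$ large; rechoosing $\eta$ to be a slightly smaller positive number yields the stated conclusion.

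The argument is essentially soft: we are comparing two sums whose main masses agree, with discrepancy confined to a transition region of total relative length $O(\delta)$. No Voronoi summation or exponential-sum input is needed here; those ingredients enter only in Lemma \ref{smooth to smooth}, where two smoothings must be compared in mean-square. The hypothesis $2p/X \le \delta$ plays only a mild role, guaranteeing that the transition intervals have length $\ge 2p$ so that the crude averaging $\sum_a \sum_{n \equiv a \pamod p} \le \sum_n$ is not wasteful. I do not anticipate any serious obstacle.
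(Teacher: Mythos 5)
Your proof is correct and follows essentially the same route as the paper's: both exploit that $1-w_\delta(n/X)$ is supported on $n\in[0,2\delta X]\cup[(1-\delta)X,X]$, bound the resulting short sums trivially (the paper uses $\tau_\star(n)\ll n^{\varepsilon}$ uniformly in $a$, you use $\sum_{n\le Y}d(n)\ll Y\log Y$ after averaging over $a$, which is an immaterial difference), and then divide by the normalization $\gg(X/p)^{1/2}$. The only cosmetic distinction is that the paper obtains the bound pointwise in $a$ rather than on average, which is why it needs $\delta\ge 2p/X$ to absorb the $O(1)$ in the count of $n\equiv a\ (\mathrm{mod}\ p)$ in the transition windows.
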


\begin{proof}[Proof of Theorems
\ref{sharp divisor} and \ref{sharp cusp forms}]
Let $\varepsilon>0$ be fixed.
Also, let $X=p^2/\Phi(p)$ where $\Phi(p)$ tends
to infinity with $p$ in a way such that $\log \Phi(p)
=o(\log p)$. Corollary 1.4
of ~\cite{FGKM} gives as $p \rightarrow \infty$ that
\begin{equation} \label{prog thm}
 \frac{1}{p-1} \# \bigg\{ a \in \mathbb F_p^{\times} : \alpha \le   E_{\star}(X,p,a; w_{\varepsilon}) 
\le \beta\bigg\}=\frac{1}{\sqrt{2\pi}} \int_{\alpha}^{\beta} \! e^{-x^2/2} \, dx+o(1),
\end{equation}
for $\star=d$ or $f$.

Take $\delta=p^{-2/3}$.
By Lemma
\ref{smooth to smooth}
we have
\[
\frac{1}{p-1}\# \bigg\{ a \in \mathbb F_p^{\times} :
\bigg|E_{\star}(X,p,a;w_{\delta})-E_{\star}(X,p,a;w_{\varepsilon})\bigg| > \varepsilon^{1/3} \bigg\}
\ll \varepsilon^{1/3}.
\]
Since, $\varepsilon>0$ is fixed we see that by \eqref{prog thm}
\[
\frac{1}{p-1} \# \bigg\{ a \in \mathbb F_p^{\times} : \alpha \le   E_{\star}(X,p,a; w_{\delta}) 
\le \beta\bigg\}=\frac{1}{\sqrt{2\pi}} \int_{\alpha}^{\beta} \! e^{-x^2/2} \, dx+O(\varepsilon^{1/3})+o(1).
\]
Additionally, by Lemma \ref{smooth to sharp} we have
\[
\frac{1}{p-1}\# \bigg\{ a \in \mathbb F_p^{\times} :
\bigg|E_{\star}(X,p,a)-E_{\star}(X,p,a;w_{\delta})\bigg| > \varepsilon^{1/3} \bigg\}
\ll \varepsilon^{-1/3} p^{-1/18}. 
\]
Thus,
\[
\frac{1}{p-1} \# \bigg\{ a \in \mathbb F_p^{\times} : \alpha \le   E_{\star}(X,p,a) 
\le \beta\bigg\}=\frac{1}{\sqrt{2\pi}} \int_{\alpha}^{\beta} \! e^{-x^2/2} \, dx+O(\varepsilon^{1/3})+O(\varepsilon^{-1/3} p^{-1/18})+o(1).
\]
Since, $\varepsilon>0$ is arbitrary the result follows.
\end{proof}

For integers $a,b,$ and $c \ge 1$ the Kloosterman $S(a,b;c)$
is given by
\[
S(a,b;c)=\sum_{\substack{x \pamod c \\ (x,c)=1 }}
e\Big( \frac{xa+\bar x b}{c}\Big),
\]
where $x \bar x \equiv 1 \pmod c$ and $e(x)=e^{2\pi i x}$.
Let
\[
\tmop{Kl}_2(a,b;c)=\frac{S(a,b;c)}{c^{1/2}}.
\]

Before proving Lemma \ref{smooth to smooth}
we require a version of the Voronoi summation formula
that is proved in ~\cite{FGKM}.
\begin{lemma}[Proposition 2.1 of ~\cite{FGKM}] \label{voronoi}
Let $Y=p^2/X$. Then for any non negative smooth function $w$
compactly supported in the positive reals we have
\[
E_{\star}(X,p,a; w)=\frac{1}{ \sigma_{\star}(Y)} \sum_{n \neq 0} \tau_{\star}(n) \mathcal B_{\star}( w ) \Big( \frac{n}{Y}\Big) \tmop{Kl}_2(a,n;p),
\]
where 
\[
\sigma_{\star}^2 (Y)=
\begin{cases}
\frac{2}{\pi^2} Y \log^3 Y  \quad \mbox{ if  } \quad \star=d,\\
c_f Y \quad \mbox{ if } \quad \star=f,
\end{cases}
\]
$\tau_{d}(n)=d(|n|)$ and
\[
\tau_{f}(n)=
\begin{cases}
\rho_f(n) \quad \mbox{ if } \quad  n>0, \\
0 \quad \mbox{ if } \quad  n < 0.
\end{cases}
\]
\end{lemma}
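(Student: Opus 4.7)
The plan is to prove the identity by additive-character detection of the congruence $n\equiv a\pmod p$ followed by the classical Voronoi summation formula applied in each non-principal Fourier component. Starting from
\[
\mathcal{S}_\star(X,p,a;w)=\frac{1}{p}\sum_{h=0}^{p-1}e(-ah/p)\sum_{n\geq 1}\tau'_\star(n)\,e(hn/p)\,w(n/X),
\]
where $\tau'_\star$ denotes $d$ (resp.\ $\rho_f$), the $h=0$ term contributes $\frac{1}{p}\sum_n\tau'_\star(n)\,w(n/X)$, which is the first piece of $\mathcal{M}_\star$. For each $h$ with $1\le h\le p-1$ I would apply Voronoi summation with test function $F(y)=w(y/X)$; by the change of variable $y=Xu$ the Hankel-type transform obeys $\mathcal{B}_\star(F)(n/p^2)=X\cdot\mathcal{B}_\star(w)(n/Y)$ with $Y=p^2/X$, and the arithmetic twist in the dual is $e(-\overline{h}n/p)$, where negative $n$ encodes the $K_0$ branch absorbed from the classical ``second'' dual series via the paper's sign-aware definition of $\mathcal{B}_d$.

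In the divisor case Voronoi additionally produces the main term $\frac{1}{p}\int_0^\infty(\log y+2\gamma-2\log p)\,w(y/X)\,dy$ for each non-zero $h$; summing over $1\le h\le p-1$ and using $\sum_{h=1}^{p-1}e(-ah/p)=-1$ (since $(a,p)=1$) yields exactly the $-\frac{1}{p^2}\int\cdots$ piece of $\mathcal{M}_d$. The cusp-form Voronoi formula carries no polar term, matching the simpler $\mathcal{M}_f$. After these main-term cancellations one is left with
\[
\mathcal{S}_\star(X,p,a;w)-\mathcal{M}_\star(X,p;w)=\frac{X}{p^2}\sum_{n\neq 0}\tau_\star(n)\,\mathcal{B}_\star(w)(n/Y)\sum_{h=1}^{p-1}e\bigl((-ah-\overline{h}n)/p\bigr),
\]
and the inner $h$-sum is the complete Kloosterman sum $S(-a,-n;p)=S(a,n;p)=\sqrt{p}\,\tmop{Kl}_2(a,n;p)$. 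A direct check that $X/p^{3/2}$ divided by the normalizer in the definition of $E_\star$ equals $1/\sigma_\star(Y)$ (use $Y=p^2/X$ and $\log(p^2/X+2)\asymp\log Y$) completes the derivation.

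The main obstacle is the sign and branch bookkeeping in the divisor case. The classical Voronoi formula produces two separate dual series, one with kernel $Y_0$ and one with $K_0$, carrying opposite twists $e(\mp\overline{h}n/p)$; combining them as a single sum over $n\neq 0$ against the unified $\mathcal{B}_d$ requires the substitution $n\mapsto-n$ in the $K_0$ series, which is precisely the design of $\mathcal{B}_d(\xi)$ for $\xi<0$. A secondary technicality is justifying the interchange of the $h$- and $n$-summations; this is absolute by Lemma~\ref{Bessel} applied with a large exponent $A$ together with the bound $\tau_\star(n)\ll n^\varepsilon$, which also confirms that no $n=0$ term contributes because $w$ is supported away from the origin.
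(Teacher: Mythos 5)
The paper offers no proof of this lemma: it is imported verbatim as Proposition~2.1 of \cite{FGKM}, so there is nothing in-paper to compare your argument against. Your reconstruction is the standard derivation and, in substance, the one carried out in \cite{FGKM}: orthogonality of additive characters mod $p$, with the $h=0$ frequency giving the first piece of $\mathcal M_\star$; Voronoi summation in each frequency $h\neq 0$, where the rescaling $\int_0^\infty w(y/X)\,\mathcal K(4\pi\sqrt{ny}/p)\,dy = X\,\mathcal B_\star(w)(\pm n/Y)$ with $Y=p^2/X$ is correct; cancellation of the divisor-function polar term against the second piece of $\mathcal M_d$ via $\sum_{h=1}^{p-1}e(-ah/p)=-1$ (and the absence of such a term for cusp forms matching the simpler $\mathcal M_f$); and assembly of the inner $h$-sum into $S(a,n;p)=\sqrt{p}\,\tmop{Kl}_2(a,n;p)$, with the $K_0$ branch folded into negative $n$ exactly as the definition of $\mathcal B_d$ intends. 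Your justification of the reordering of sums via Lemma~\ref{Bessel} and $\tau_\star(n)\ll n^{\varepsilon}$ is also fine. The one place your ``direct check'' cannot literally close is the normalization: with $E_d(X,p,a;w)$ as defined in the introduction, the denominator carries $\lVert w\rVert_{L^2}$ and $\log^3(p^2/X+2)$, whereas the lemma's right-hand side uses $\sigma_d(Y)^2=\tfrac{2}{\pi^2}Y\log^3 Y$; the relation $\log(p^2/X+2)\asymp\log Y$ only yields the identity up to a factor $1+o(1)$, not an exact equality. This is a mismatch between the lemma as quoted and the surrounding definitions (the exact identity holds for the normalization used in \cite{FGKM}), not a defect in your derivation, but it should be stated openly rather than absorbed into an $\asymp$.
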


In the proof of Lemma \ref{smooth to smooth}
we will need to estimate
\[
\frac{1}{p-1}\sum_{1 \le a < p} \tmop{Kl}_2(a,m;p)\tmop{Kl}_2(a,n;p).
\]
First note that if $m$ or $n$ is divisible by $p$
it is easily seen that this
is $\ll 1/p$. Next, for integers $\ell$ define
\[
f(\ell)=
\begin{cases}
0 \quad \mbox{  if  }  \quad \ell \equiv 0 \pamod p, \\
e(\overline \ell/p) \quad \mbox{ otherwise.} 
\end{cases}
\]
Applying, the discrete Plancherel formula we have
for $m$ and $n$ not divisible by $p$ that 
\begin{equation} \notag
\begin{split}
\frac{1}{p-1}\sum_{0 \le a <p}
\tmop{Kl}_2(a,m;p)\tmop{Kl}_2(a,n;p)
=& \frac{1}{p-1} \sum_{0 \le b <p} f(mb)\overline{f(nb)} \\
=& \frac{1}{p-1}\sum_{1 \le b <p} e\Big(\frac{\overline{b}(\overline m-\overline n)}{p} \Big).
\end{split}
\end{equation}
We conclude that
\begin{equation} \label{kloosterman}
\frac{1}{p-1}\sum_{1 \le a <p}
\tmop{Kl}_2(a,n;p) \tmop{Kl}_2(a,m;p)
=
\begin{cases}
1+O\Big(\frac{1}{p^2} \Big)  \mbox{ if } n \equiv m \! \! \! \pmod p  \mbox{ and } p \nmid n,  \\
O\Big(\frac{1}{p}\Big) \mbox{ otherwise.}
\end{cases}
\end{equation}

\begin{proof}[Proof of Lemma \ref{smooth to smooth}]
Let $\phi_{\delta,\varepsilon}(x)=w_{\delta}(x)-w_{\varepsilon}(x)$ and $Y=p^2/X$. Applying Lemma \ref{voronoi} we see that
\begin{equation*}
\begin{split}
&\frac{1}{p-1} \sum_{1 \le a < p}
\bigg( E_{\star}(X,p,a; w_{\delta})-E_{\star}(X,p,a;w_{\varepsilon}) \bigg)^2
 \\
& \qquad \qquad \qquad \ll \frac{1}{p \cdot \sigma_{\star}^2(Y)}
\sum_{1 \le a < p} \bigg(
\sum_{1 \le |n| \le p/2}
\tau_{\star}(n)
\mathcal B_{\star}(\phi_{\delta,\varepsilon})\Big(\frac{n}{Y} \Big)\tmop{Kl}_2(a,n;p) \bigg)^2 \\
&  \qquad \qquad \qquad \quad
+\frac{1}{p \cdot \sigma_{\star}^2(Y)}
\sum_{1 \le a < p} \bigg(
\sum_{ |n| > p/2}
\tau_{\star}(n)
 \mathcal B_{\star}(\phi_{\delta,\varepsilon})\Big(\frac{n}{Y} \Big)\tmop{Kl}_2(a,n;p) \bigg)^2=\Sigma_1+\Sigma_2.
\end{split}
\end{equation*}

We first consider $\Sigma_2$. We have
\[
\Sigma_2
=
\frac{1}{  \sigma_{\star}^2(Y)}
\sum_{|m|,|n| > p/2 } \tau_{\star}(m)\tau_{\star}(n) 
\mathcal B_{\star}(\phi_{\delta,\varepsilon})\Big(\frac{m}{Y}\Big)
\mathcal B_{\star}(\phi_{\delta,\varepsilon})\Big(\frac{n}{Y}\Big) 
\frac{1}{p} \sum_{1 \le a < p} \tmop{Kl}_2(a,m;p)\tmop{Kl}_2(a,n;p).
\]
By \eqref{kloosterman}, Lemma \ref{Bessel},
and the bound $\tau_{\star}(n) \ll n^\varepsilon$ the contribution of the terms with $m \equiv n \pmod p$ to $\Sigma_2$
is
\begin{equation*}
\begin{split}
&\ll \frac{1}{ \sigma_{\star}^2(Y)}
\sum_{\substack{|m|,|n| > p/2 \\ m \equiv n \pamod p}} \bigg| \tau_{\star}(m)\tau_{\star}(n) 
\mathcal B_{\star}(\phi_{\delta,\varepsilon})\Big(\frac{m}{Y}\Big)
\mathcal B_{\star}(\phi_{\delta,\varepsilon})\Big(\frac{n}{Y}\Big) \bigg|\\
& 
\ll 
\frac{Y^{5/2}}{\sigma_{\star}^2(Y) \delta^2}
\sum_{|m| > p/2} \frac{1}{|m|^{5/4-\varepsilon}}
\sum_{\substack{|n| > p/2 \\ n \equiv m \pamod p}}
\frac{1}{|n|^{5/4-\varepsilon}} \ll
\frac{Y^{5/2}}{p^{3/2-\varepsilon} \sigma_{\star}^2(Y) \delta^2 }.
\end{split}
\end{equation*} 
Similarly, applying \eqref{kloosterman} and Lemma \ref{Bessel}
the sum of the remaining terms in $\Sigma_2$ is
\[
\ll \frac{1}{p \cdot \sigma_{\star}^2(Y)}\bigg(\sum_{ |n|  > p/2} 
\bigg| \tau_{\star}(n) \mathcal B_{\star}(\phi_{\delta,\varepsilon})\Big(\frac{n}{Y} \Big) \bigg|\bigg)^2 
\ll \frac{Y^{5/2}}{p \cdot \sigma_{\star}^2(Y) \delta^2}
\bigg(\sum_{|n| > p/2 } 
\frac{1}{|n|^{5/4-\varepsilon}}  \bigg)^2 \ll \frac{Y^{5/2}}{p^{3/2-\varepsilon} \sigma_{\star}^2(Y) \delta^2 }.
\]
Thus,
\begin{equation}\label{sigma2 bd}
\Sigma_2 \ll \frac{Y^{3/2}}{p^{3/2-\varepsilon}  \delta^2 }= \frac{p^{3/2+\varepsilon}}{X^{3/2} \delta^2}.
\end{equation}

The estimation of $\Sigma_1$ is similar. First note that
\begin{equation} \label{eqn tired}
\begin{split}
\Sigma_1
& \ll \frac{1}{p \cdot \sigma_{\star}^2(Y)}
\sum_{1 \le a < p} \bigg(
\sum_{1 \le n \le p/2}
\tau_{\star}(n)
\mathcal B_{\star}(\phi_{\delta,\varepsilon})\Big(\frac{n}{Y} \Big)\tmop{Kl}_2(a,n;p) \bigg)^2 \\
& \quad
+\frac{1}{p \cdot \sigma_{\star}^2(Y)}
\sum_{1 \le a < p} \bigg(
\sum_{ -p/2 \le n \le -1}
\tau_{\star}(n)
 \mathcal B_{\star}(\phi_{\delta,\varepsilon})\Big(\frac{n}{Y} \Big)\tmop{Kl}_2(a,n;p) \bigg)^2.
\end{split}
\end{equation}
Here if $m \equiv n \pmod p$ then $m=n$
so that the first sum above is
\begin{equation} \notag
\ll
\frac{1}{ \sigma_{\star}^2(Y)}
\sum_{1 \le n \le p/2 } \tau_{\star}(n)^2 
 \Big(\mathcal B_{\star}(\phi_{\delta,\varepsilon})\Big(\frac{n}{Y}\Big)\Big)^2
+\frac{1}{p \cdot \sigma_{\star}^2(Y)}\bigg(\sum_{1 \le n  \le p/2} \bigg|\tau_{\star}(n)\mathcal B_{\star}(\phi_{\delta,\varepsilon})\Big(\frac{n}{Y} \Big) \bigg| \bigg)^2.
\end{equation}
Now, apply the Cauchy-Schwarz inequality to the second sum above. Thus, by Lemma \ref{simple est} the first sum in \eqref{eqn tired}
is
\[
\ll
\frac{1}{ \sigma_{\star}^2(Y)} \cdot \sum_{1 \le n < p/2 } \tau_{\star}(n)^2 
 \Big(\mathcal B_{\star}(\phi_{\delta,\varepsilon})\Big(\frac{n}{Y}\Big)\Big)^2 \ll \varepsilon.
\]
The second sum in \eqref{eqn tired} satisfies this bound
as well. Thus, by this and \eqref{sigma2 bd} the proof is completed.

\end{proof}

\begin{proof}[Proof of Lemma \ref{smooth to sharp}]
Observe that for any $\varepsilon>0$
\[
\mathcal S_{\star}(X,p,a)-\mathcal S_{\star}(X,p,a;w_{\delta})=
\sum_{\substack{n \le X \\ n \equiv a \pamod p} }\tau_{\star}(n)\Big(1-w_{\delta}\Big(\frac{n}{X}\Big)\Big)
\ll \frac{\delta X^{1+\varepsilon}}{p},
\]
uniformly in $a$.
Similarly,
\[
\frac1p \sum_{n \le X} \tau_{\star}(n)\Big(1-w_{\delta}\Big(\frac{n}{X}\Big)\Big)
\ll \frac{\delta X^{1+\varepsilon}}{p},
\]
and
\[
\frac{1}{p^2}\int_0^X (\log y+2\gamma-2\log p)\Big(1-w_{\delta}\Big(\frac{y}{X}\Big) \Big)\, dy
\ll \frac{\delta X }{p^2} \log( Xp).
\]
Thus,
\[
\mathcal M_{\star}(X,p,a)-\mathcal M_{\star}(X,p,a;w_{\delta}) \ll \frac{\delta X^{1+\varepsilon}}{p}.
\]
It follows that, uniformly in $a$, we have
\[
\bigg| E_{\star}(X,p,a)-E_{\star}(X,p,a;w_{\delta}) \bigg|
\ll \frac{\delta X^{1/2+\varepsilon}}{ p^{1/2}}.
\]
We conclude that
\[
\frac{1}{p-1} \sum_{1 \le a < p}
\bigg| E_{\star}(X,p,a)-E_{\star}(X,p,a;w_{\delta}) \bigg| \ll  p^{-\varepsilon}.
\]
for $2p/X \le \delta \ll (p/X)^{1/2} (pX)^{-\varepsilon}$.

\end{proof}

\section{The distribution of $d(n)$ in short intervals}

\subsection{The Variance of $S(x,L)$}

To take averages, instead of working first with Lebesgue measure,
we use a smooth average around $T$, so we take a Schwartz function
$\omega\geq0$ supported on the positive real numbers with
a unit mass. Define our averages by
\[
\left\langle f \right\rangle =\int_{-\infty}^{\infty}f\left(x\right)\omega\left(\frac{x}{T}\right)\frac{\mbox{d}x}{T}=\int_{-\infty}^{\infty}f\left(xT\right)\omega\left(x\right)\mbox{d}x.
\]
Also, for $f \in L^1(\mathbb R)$ let
\[
\widetilde f(\xi)=\int_{-\infty}^{\infty}f\left(x\right) e^{-2\pi i  \xi \sqrt{|x|}}\mbox{d}x.
\]
By repeatedly integrating by parts it follows for any $A  \ge 1$ that
\begin{equation} \label{trans bd} 
\widetilde \omega(\xi) \ll  \min \Big\{1, |\xi|^{-A} \Big\},
\end{equation}
where the implied constant depends on $\omega$.

Define $F\left(x\right)=x^{-1/4}\Delta\left(x\right).$
Let $L=L\left(T\right)\to\infty$ as $T\to\infty$,
with the condition that $L\ll T^\varepsilon$ for all $\varepsilon>0$. Define
\[
S\left(x,L\right)=F\left(\Big(\sqrt{x}+\frac{1}{L}\Big)^2\right)-F\left(x\right).
\]
Also, let
\[
\sigma^2=\frac{16}{\pi^2} \frac{\log^3 L}{L}.
\]

We first show that the expectation of $S\left(x,L\right)$ tends to
zero as $T\to\infty$:
\begin{lemma}
\textup{For all $\varepsilon>0$, $\left\langle S\left(x,L\right)\right\rangle =O\left(T^{-1/4+\varepsilon}\right)$.}\end{lemma}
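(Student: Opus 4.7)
The approach relies on applying the classical truncated Voronoi summation formula for $\Delta$ at the two arguments $y=x$ and $y=(\sqrt x+1/L)^{2}$, and then exploiting the Schwartz decay of $\widetilde\omega$ given by \eqref{trans bd}. Recall that for any $N\geq 1$ and any $\varepsilon>0$,
\[
\Delta(y) = \frac{y^{1/4}}{\pi\sqrt 2} \sum_{n\leq N}\frac{d(n)}{n^{3/4}} \cos\!\left(4\pi\sqrt{ny}-\frac{\pi}{4}\right) + O\!\left(\frac{y^{1/2+\varepsilon}}{N^{1/2}}\right),
\]
uniformly for $y$ in any compact subset of $(0,\infty)$. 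Dividing by $y^{1/4}$ yields the corresponding expansion for $F(y)$. Since $\sqrt y = \sqrt x + 1/L$ in the second case, the two expansions differ in each $n$-th term only by an $x$-independent phase shift of $4\pi\sqrt n/L$.

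Substituting into $S(x,L)$ and averaging against $\omega(x/T)/T$, the plan is to interchange the finite sum with the integral. After the change of variables $x=Ty$ and writing $\cos\theta = \operatorname{Re} e^{i\theta}$, the $n$-th integral reduces to the real part of $\bigl(e^{i(4\pi\sqrt{n}/L - \pi/4)} - e^{-i\pi/4}\bigr)\,\widetilde\omega(-2\sqrt{nT})$. By \eqref{trans bd} we have $|\widetilde\omega(-2\sqrt{nT})|\ll_A (nT)^{-A}$ for any fixed $A\geq 1$, and combined with the trivial estimate $d(n) \ll n^\varepsilon$, the full oscillatory contribution is bounded by
\[
T^{-A}\sum_{n\geq 1} \frac{n^{\varepsilon}}{n^{3/4+A}} \ll_A T^{-A},
\]
which is negligible for $A$ sufficiently large.

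It remains to control the Voronoi truncation error, which contributes $O(T^{1/4+\varepsilon}/N^{1/2})$ uniformly on the effective support of $\omega(x/T)$. Choosing $N=T$ (or any larger polynomial in $T$) already yields $O(T^{-1/4+\varepsilon})$, which proves the claim. The only mildly delicate point is guaranteeing uniformity of the Voronoi expansion throughout the effective support of $\omega(x/T)$; since $\omega$ is Schwartz, that support is essentially confined to $[T/C,CT]$ with $O(T^{-A})$ mass outside, so no genuine obstacle arises. In summary, the lemma is a direct consequence of Voronoi combined with the Schwartz decay encoded in \eqref{trans bd}, and there is no deep difficulty to overcome.
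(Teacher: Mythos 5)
Your proposal is correct and follows essentially the same route as the paper: both apply the truncated Voronoi expansion (Titchmarsh (12.4.4)) at $x$ and $(\sqrt x+1/L)^2$, average term by term, and kill the oscillatory sum via the rapid decay of $\widetilde\omega$ from \eqref{trans bd}, leaving only the $O(T^{-1/4+\varepsilon})$ truncation error. The only cosmetic difference is that the paper first combines the two cosines into $\sin(2\pi\sqrt n/L)\sin(4\pi\sqrt n(\sqrt x+\tfrac1{2L})-\tfrac\pi4)$ before averaging, whereas you keep them as separate phase-shifted terms.
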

\begin{proof}
We use the following formula (see Titchmarsh, \cite{Titchmarsh} (12.4.4)
for example)
\[
F\left(x\right)=\frac{1}{\pi\sqrt{2}}\sum_{n\leq T}\frac{d\left(n\right)}{n^{3/4}}\cos\left(4\pi \sqrt{ nx}-\frac{\pi}{4}\right)+O\left(T^{-1/4+\varepsilon}\right)
\]
uniformly for $T\leq x\leq2T$.

We conclude that for such $x$
\begin{align*}
S\left(x,L\right) & =\frac{-2}{\pi\sqrt{2}}\sum_{n\leq T}\frac{d\left(n\right)}{n^{3/4}}\sin\left(\frac{2\pi\sqrt{n}}{L}\right)\sin\left(4\pi\sqrt{n}\left(\sqrt{x}+\frac{1}{2L}\right)-\frac{\pi}{4}\right)\\
 & \quad +O\left(T^{-1/4+\varepsilon}\right).
\end{align*}
So
\begin{align*}
\left\langle S\left(x,L\right)\right\rangle  & =\frac{-2}{\pi\sqrt{2}}\sum_{n\leq T}\frac{d\left(n\right)}{n^{3/4}}\sin\left(\frac{2\pi\sqrt{n}}{L}\right)\left\langle \sin\left(4\pi\sqrt{n}\left(\sqrt{x}+\frac{1}{2L}\right)-\frac{\pi}{4}\right)\right\rangle \\
 & \quad +O\left(T^{-1/4+\varepsilon}\right).
\end{align*}
Note that
\begin{align*}
\left\langle \sin\left(4\pi\sqrt{n}\left(\sqrt{x}+\frac{1}{2L}\right)-\frac{\pi}{4}\right)\right\rangle  & =\frac{1}{2i}\left(\widetilde {\omega}\left(-2\sqrt{T n}\right)e^{\pi i\left(\frac{2\sqrt{n}}{L}-\frac{1}{4}\right)}\right.\\
 & \left.-\widetilde {\omega}\left(2\sqrt{Tn}\right)e^{-\pi i\left(\frac{2\sqrt{n}}{L}-\frac{1}{4}\right)}\right),
\end{align*}
so from the rapid decay of $\widetilde {\omega}$ we get that for all $A>0$
\[
\left\langle S\left(x,L\right)\right\rangle \ll\sum_{n=1}^{\infty}\frac{d\left(n\right)}{n^{3/4}}\frac{1}{T^{A/2}n^{A/2}}+O\left(T^{-1/4+\varepsilon}\right)=O\left(T^{-1/4+\varepsilon}\right).
\]
\end{proof}

We now compute the variance of $S\left(x,L\right):$
\begin{lemma} \label{variance}
\textup{$\left\langle S\left(x,L\right)^{2}\right\rangle \sim\frac{16}{\pi^{2}}\frac{\log^{3}L}{L}=\sigma^2$.}\end{lemma}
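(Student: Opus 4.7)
The plan is to substitute Titchmarsh's truncated Voronoi expansion derived in the proof of the preceding lemma, namely
\[
S(x,L) = -\frac{2}{\pi\sqrt{2}} \sum_{n \le T} \frac{d(n)}{n^{3/4}} \sin\!\Big(\frac{2\pi\sqrt{n}}{L}\Big) \sin\!\Big(4\pi\sqrt{n}\big(\sqrt{x}+\tfrac{1}{2L}\big) - \tfrac{\pi}{4}\Big) + O(T^{-1/4+\varepsilon}),
\]
uniformly for $T \le x \le 2T$, then square and apply $\langle\cdot\rangle$. Expanding $\sin A \sin B = \tfrac{1}{2}[\cos(A-B) - \cos(A+B)]$ turns $\langle S(x,L)^2\rangle$ into a double sum over $m,n \le T$ of averages $\langle \cos(4\pi(\sqrt{m}\pm\sqrt{n})\sqrt{x} + \text{phase})\rangle$, each of which is the real part of $\widetilde\omega(\pm 2(\sqrt{m}\pm\sqrt{n})\sqrt{T})$ times a unimodular factor. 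The ``sum frequency'' terms, with argument $\gg \sqrt{T}$, are negligible by the rapid decay bound \eqref{trans bd}.

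For the ``difference frequency,'' the diagonal $m=n$ yields the constant contribution
\[
\frac{1}{\pi^{2}} \sum_{n \le T} \frac{d(n)^{2}}{n^{3/2}} \sin^{2}\!\Big(\frac{2\pi\sqrt{n}}{L}\Big),
\]
after absorbing the $\cos(\phi_m+\phi_n)$ piece into the negligible sum-frequency terms. Using \eqref{eq:varformula} with $c_3 = 1/\pi^2$ and partial summation, the lower-order polynomial pieces and the remainder $R(t) \ll t^{1/2+\varepsilon}$ contribute $o(\log^{3} L/L)$, while the main integral
\[
\frac{1}{\pi^{4}} \int_{1}^{T} \frac{\log^{3} t}{t^{3/2}} \sin^{2}\!\Big(\frac{2\pi\sqrt{t}}{L}\Big)\, dt
\]
is transformed by $u = 2\pi\sqrt{t}/L$ into $\tfrac{32\pi}{\pi^{4} L} \int_{2\pi/L}^{2\pi\sqrt{T}/L} (\log L + \log(u/(2\pi)))^{3} \sin^{2}(u)/u^{2}\, du$. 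Extracting the leading $\log^{3} L$ factor and using the elementary identity $\int_{0}^{\infty} \sin^{2}(u)/u^{2}\, du = \pi/2$ gives exactly $\sigma^{2} = \tfrac{16}{\pi^{2}} \tfrac{\log^{3} L}{L}$; the subleading powers of $\log L$ (coming from expanding the cube) and the cutoffs at $u = 2\pi/L$ and $u = 2\pi\sqrt{T}/L$ are all $o(\log^{3}L/L)$.

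For the off-diagonal $m \ne n$, group terms by $k = m - n$ so that $|\sqrt{m}-\sqrt{n}| \asymp |k|/\sqrt{\max(m,n)}$ and therefore
\[
\big|\widetilde\omega\big(2(\sqrt{m}-\sqrt{n})\sqrt{T}\,\big)\big| \ll \big(1 + |k|\sqrt{T/\max(m,n)}\big)^{-A}.
\]
Combining this decay with $|\sin(2\pi\sqrt{n}/L)| \le \min(2\pi\sqrt{n}/L,\, 1)$, a dyadic decomposition in $n$, and the Cauchy--Schwarz inequality against the moment $\sum_{n \le T} d(n)^{2} \ll T \log^{3} T$ shows the total off-diagonal contribution is $o(\log^{3} L/L)$.

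The main obstacle is precisely this off-diagonal estimate: for pairs $m, n$ near $T$ with $|m-n|=O(1)$ the Fourier transform $\widetilde\omega$ is only of size $O(1)$, so the required savings must come from exploiting the smallness of $\sin(2\pi\sqrt{n}/L)$ on $n \ll L^{2}$ while simultaneously using the sparseness of close integer pairs on $n \gg L^{2}$, balanced against the growth of the divisor function. This balancing is what forces the hypothesis $\log L = o(\log T)$.
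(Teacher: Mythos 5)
Your proposal is correct in substance and follows the same overall strategy as the paper (truncated Voronoi expansion, square, average, diagonal gives the main term via \eqref{eq:varformula} and the substitution reducing to $\int_0^\infty \sin^2(2\pi y)/y^2\,dy$), but it diverges on the one point you flag as the ``main obstacle,'' and there the paper has a cleaner device: it truncates the expansion at $T^{1-\delta}$ rather than at $T$. With that truncation every off-diagonal pair satisfies $|\sqrt{m}\pm\sqrt{n}|\ge \tfrac12 T^{-1/2+\delta/2}$, so $2\sqrt{T}|\sqrt{m}\pm\sqrt{n}|\gg T^{\delta/2}$ and the rapid decay of $\widetilde\omega$ makes \emph{every} off-diagonal term $\ll T^{-\delta A/2}$; the whole off-diagonal sum is then trivially $\le T^{2-\delta A/2}$, with no near-diagonal analysis at all. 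Your version, truncating at $T$, genuinely has to confront pairs $m\ne n$ of size $\asymp T$ with $|m-n|=O(1)$, where $\widetilde\omega$ gives no saving. Your proposed estimate does go through, but not for the reasons you give in your last paragraph: in the troublesome range $n\asymp T\gg L^2$ the factor $\sin(2\pi\sqrt{n}/L)$ is not small, and no balancing against $\log L=o(\log T)$ is needed. What saves you is simply the weight $(mn)^{-3/4}$: for each dyadic block $m,n\asymp N$ the decay $(1+|m-n|\sqrt{T/N})^{-A}$ restricts to $O(\sqrt{N/T}\,T^{\varepsilon})$ values of $m$ per $n$, and summing $d(m)d(n)(mn)^{-3/4}$ over these gives $O(T^{-1/2+\varepsilon})$ in total, which is $o(\log^3 L/L)$ since $L\ll T^{\varepsilon}$. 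The hypothesis $\log L=o(\log T)$ is not forced by the off-diagonal at all; it is what makes the truncation error $O(T^{-1/4+\varepsilon})$ (and its Cauchy--Schwarz cross term with $P$, which you should state explicitly) negligible against $\sigma^2\asymp \log^3L/L$. So: your route works, is marginally more robust (no auxiliary parameter $\delta$ in the truncation), but costs a genuine near-diagonal estimate that the paper's choice of cutoff avoids entirely, and your diagnosis of where the difficulty lies and what the hypothesis on $L$ is for should be corrected.
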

\begin{proof}
Again, from the formula (12.4.4) in \cite{Titchmarsh}, we get that for any (small) $\delta>0$
\begin{align}
S\left(x,L\right) & =\frac{-2}{\pi\sqrt{2}}\sum_{n\leq T^{1-\delta}}\frac{d\left(n\right)}{n^{3/4}}\sin\left(\frac{2\pi\sqrt{n}}{L}\right)\sin\left(4\pi\sqrt{n}\left(\sqrt x+\frac{1}{2L}\right)-\frac{\pi}{4}\right)\label{eq:difference}\\
 & \quad +O\left(T^{-1/4+\delta}\right).\nonumber
\end{align}
Denote the sum in (\ref{eq:difference}) by $P$.
By the Cauchy-Schwarz inequality
\[
\left\langle S\left(x,L\right)^{2}\right\rangle =\left\langle P^{2}\right\rangle +O\left(T^{-1/4+\delta}\sqrt{\left\langle P^{2}\right\rangle }+T^{-1/2+2\delta}\right)
\]
so (since $L\ll T^\varepsilon$ for all $\varepsilon>0$) it is enough to show that $\left\langle P^{2}\right\rangle \sim\frac{16}{\pi^{2}}\frac{\log^{3}L}{L}.$

Indeed, we first see that the contribution of the off-diagonal terms
is minor: their contribution equals
\begin{align}
 & \frac{2}{\pi^{2}}\sum_{n\neq m\leq T^{1-\delta}}\frac{d\left(n\right)d\left(m\right)}{\left(nm\right)^{3/4}}\sin\left(\frac{2\pi\sqrt{n}}{L}\right)\sin\left(\frac{2\pi\sqrt{m}}{L}\right)\label{eq:offdiag}\\
 & \qquad \qquad \qquad \qquad \times\left\langle \sin\left(4\pi\sqrt{n}\left(\sqrt x+\frac{1}{2L}\right)-\frac{\pi}{4}\right)\sin\left(4\pi\sqrt{m}\left(\sqrt x+\frac{1}{2L}\right)-\frac{\pi}{4}\right)\right\rangle \nonumber.
\end{align}
Observe that
\begin{gather*}
\sin\left(4\pi\sqrt{n}\left(\sqrt x+\frac{1}{2L}\right)-\frac{\pi}{4}\right)\sin\left(4\pi\sqrt{m}\left(\sqrt x+\frac{1}{2L}\right)-\frac{\pi}{4}\right)\\
=-\frac{1}{4}\left(e^{\left(4\pi\left(\sqrt x+\frac{1}{2L}\right)\left(\sqrt{n}+\sqrt{m}\right)-\frac{\pi}{2}\right)i}+e^{\left(4\pi\left(\sqrt x+\frac{1}{2L}\right)\left(-\sqrt{n}-\sqrt{m}\right)+\frac{\pi}{2}\right)i}\right.\\
\left.-e^{4\pi\left(\sqrt x+\frac{1}{2L}\right)\left(\sqrt{n}-\sqrt{m}\right)i}-e^{4\pi\left(\sqrt x+\frac{1}{2L}\right)\left(-\sqrt{n}+\sqrt{m}\right)i}\right).
\end{gather*}
So 
\begin{gather*}
\left\langle \sin\left(4\pi\sqrt{n}\left(\sqrt x+\frac{1}{2L}\right)-\frac{\pi}{4}\right)\sin\left(4\pi\sqrt{m}\left(\sqrt x+\frac{1}{2L}\right)-\frac{\pi}{4}\right)\right\rangle \\
=-\frac{1}{4}\left(\widetilde{\omega}\left(2\sqrt{T}\left(-\sqrt{n}-\sqrt{m}\right)\right)e^{\pi i\left(-\frac{1}{2}+\frac{2}{L}\left(\sqrt{n}+\sqrt{m}\right)\right)}\right.\\
\left.+\widetilde{\omega}\left(2\sqrt{T}\left(\sqrt{n}+\sqrt{m}\right)\right)e^{\pi i\left(\frac{1}{2}+\frac{2}{L}\left(-\sqrt{n}-\sqrt{m}\right)\right)}\right.\\
\left.-\widetilde{\omega}\left(2\sqrt{T}\left(-\sqrt{n}+\sqrt{m}\right)\right)e^{\frac{2\pi i}{L}\left(\sqrt{n}-\sqrt{m}\right)}\right.\\
\left.-\widetilde{\omega}\left(2\sqrt{T}\left(\sqrt{n}-\sqrt{m}\right)\right)e^{\frac{2\pi i}{L}\left(-\sqrt{n}+\sqrt{m}\right)}\right).
\end{gather*}
Since $\sqrt{m}\pm\sqrt{n}\gg T^{-1/2+\delta/2}$
and $\widetilde{\omega}$ rapidly decays by \eqref{trans bd}, we conclude that for every
$A>0$
\begin{gather*}
\left\langle \sin\left(4\pi\sqrt{n}\left(\sqrt{x}+\frac{1}{2L}\right)-\frac{\pi}{4}\right)\sin\left(4\pi\sqrt{m}\left(\sqrt x+\frac{1}{2L}\right)-\frac{\pi}{4}\right)\right\rangle \\
\ll\left(\sqrt{T}\left(\sqrt{m}\pm\sqrt{n}\right)\right)^{-A}\ll T^{-\delta A/2}.
\end{gather*}
So the sum in (\ref{eq:offdiag}) is bounded above by
\[
\sum_{n\neq m\leq T^{1-\delta}}T^{-\delta A/2}\leq T^{2-\delta A/2}.
\]

We get that for all $B>0$
\begin{align*}
\left\langle P^{2}\right\rangle  & =\frac{2}{\pi^{2}}\sum_{n\leq T^{1-\delta}}\frac{d^{2}\left(n\right)}{n^{3/2}}\sin^{2}\left(\frac{2\pi\sqrt{n}}{L}\right)\left\langle \sin^{2}\left(4\pi\sqrt{n}\left(\sqrt x+\frac{1}{2L}\right)-\frac{\pi}{4}\right)\right\rangle \\
 & \quad +O\left(T^{-B}\right).
\end{align*}
Note that
\begin{gather*}
\left\langle \sin^{2}\left(4\pi\sqrt{n}\left(\sqrt x+\frac{1}{2L}\right)-\frac{\pi}{4}\right)\right\rangle =\\
-\frac{1}{4}\left(\widetilde{\omega}\left(-4\sqrt{ T n}\right)e^{\pi i\left(-\frac{1}{2}+\frac{4}{L}\sqrt{n}\right)}+\widetilde{\omega}\left(4\sqrt{Tn}\right)e^{\pi i\left(\frac{1}{2}-\frac{4}{L}\sqrt{n}\right)}-2\right)\\
=\frac{1}{2}+O\left(T^{-B}\right).
\end{gather*}
So actually
\[
\left\langle P^{2}\right\rangle =\frac{1}{\pi^{2}}\sum_{n\leq T^{1-\delta}}\frac{d^{2}\left(n\right)}{n^{3/2}}\sin^{2}\left(\frac{2\pi\sqrt{n}}{L}\right)+O\left(T^{-B}\right).
\]
To evaluate the main term, write
\[
\sigma_{T^{1-\delta}}^{2}:=\frac{1}{\pi^{2}}\sum_{n\leq T^{1-\delta}}\frac{d^{2}\left(n\right)}{n^{3/2}}\sin^{2}\left(\frac{2\pi\sqrt{n}}{L}\right).
\]
Applying partial summation and using (\ref{eq:varformula}), we get
that
\begin{align*}
\sigma_{T^{1-\delta}}^{2} & \sim\frac{1}{\pi^{4}}\int_{1}^{T^{1-\delta}}\frac{\log^{3}x}{x^{3/2}}\sin^{2}\left(\frac{2\pi\sqrt{x}}{L}\right)\mbox{d}x\\
 & =\frac{16}{\pi^{4}L}\int_{1/L}^{T^{1/2-\delta/2}/L}\frac{\log^{3}\left(Ly\right)}{y^{2}}\sin^{2}\left(2\pi y\right)\mbox{d}y\\
 & \sim\frac{16}{\pi^{4}}\frac{\log^{3}L}{L}\int_{1/L}^{T^{1/2-\delta/2}/L}\frac{\sin^{2}\left(2\pi y\right)}{y^{2}}\mbox{d}y\\
 & \sim\frac{16}{\pi^{4}}\frac{\log^{3}L}{L}\int_{0}^{\infty}\frac{\sin^{2}\left(2\pi y\right)}{y^{2}}\mbox{d}y,
\end{align*}
where the last relation holds because
\[
\int_{0}^{1/L}\frac{\sin^{2}\left(2\pi y\right)}{y^{2}}\mbox{d}y\ll\frac{1}{L}
\]
and
\[
\int_{T^{1/2-\delta/2}/L}^{\infty}\frac{\sin^{2}\left(2\pi y\right)}{y^{2}}\mbox{d}y\ll\frac{L}{T^{1/2-\delta/2}}.
\]
Since $\int_{0}^{\infty}\frac{\sin^{2}\left(2\pi y\right)}{y^{2}}\mbox{d}y=\pi^{2},$
we conclude that
\[
\left\langle P^{2}\right\rangle \sim\sigma_{T^{1-\delta}}^{2}\sim\frac{16}{\pi^{2}}\frac{\log^{3}L}{L}=\sigma^2.
\]

\end{proof}
For any $M=O\left(T^{1-\delta}\right)$ such that $L/\sqrt{M}\to0$,
define the ``short'' sum
\[
S\left(x,L,M\right)=\frac{-2}{\pi\sqrt{2}}\sum_{n\leq M}\frac{d\left(n\right)}{n^{3/4}}\sin\left(\frac{2\pi\sqrt{n}}{L}\right) \sin\left(4\pi\sqrt{n}\left(\sqrt{x}+\frac{1}{2L}\right)-\frac{\pi}{4}\right).
\]
By the same arguments as above, we conclude that
\[
\left\langle S\left(x,L,M\right)\right\rangle =O\left(T^{-B}\right)
\]
for all $B>0$, with the implied constant independent of $M$, and
\[
\left\langle S\left(x,L,M\right)^{2}\right\rangle \sim\sigma^2.
\]
The advantage of working with $S\left(x,L,M\right)$ is that we can also calculate its higher moments -- this will be done in the next section.

By considerations similar to the above, we get that the normalized distance between
the short and the long sums tends to zero in the $L^{2}$-norm:
\begin{lemma} \label{conv prob}
As $T\to\infty$
\[
\left\langle \left(\frac{S\left(x,L\right)-S\left(x,L,M\right)}{\sigma}\right)^{2}\right\rangle =o(1).
\]
\end{lemma}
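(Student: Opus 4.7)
The plan is to reduce the bound to a tail of the diagonal sum already estimated in the proof of Lemma \ref{variance}. By the same appeal to \cite{Titchmarsh} (equation (12.4.4)) that produced \eqref{eq:difference},
\[
S(x,L)-S(x,L,M)=\bigl(P-S(x,L,M)\bigr)+O(T^{-1/4+\delta})
\]
uniformly for $T\le x\le 2T$, where $P:=S(x,L,T^{1-\delta})$; shrinking $\delta$ we may assume $M\le T^{1-\delta}$, since by hypothesis $M=O(T^{1-\delta_0})$. After squaring, averaging, and dividing by $\sigma^{2}\gg L^{-1}\gg T^{-\varepsilon}$, this pointwise error contributes $o(1)$, so it remains to show $\langle (P-S(x,L,M))^{2}\rangle=o(\sigma^{2})$.

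Now $P-S(x,L,M)$ is the same trigonometric polynomial as $P$, but restricted to $M<n\le T^{1-\delta}$. I repeat the off-diagonal/diagonal split of Lemma \ref{variance} verbatim: the separation $|\sqrt m\pm\sqrt n|\gg T^{-1/2+\delta/2}$ still holds in this range, so by \eqref{trans bd} the off-diagonal contribution is $O(T^{-B})$ for every $B>0$, and $\langle \sin^{2}(\cdots)\rangle=1/2+O(T^{-B})$ as before. Hence
\[
\bigl\langle(P-S(x,L,M))^{2}\bigr\rangle=\frac{1}{\pi^{2}}\sum_{M<n\le T^{1-\delta}}\frac{d^{2}(n)}{n^{3/2}}\sin^{2}\!\left(\frac{2\pi\sqrt n}{L}\right)+O(T^{-B}).
\]
Applying partial summation with \eqref{eq:varformula} and then the substitutions $y=\sqrt x/L$ and $u=Ly$, this diagonal is bounded by
\[
\frac{16}{\pi^{4}L}\int_{\sqrt M/L}^{\infty}\frac{\log^{3}(Ly)}{y^{2}}\,dy=\frac{16}{\pi^{4}}\int_{\sqrt M}^{\infty}\frac{\log^{3}u}{u^{2}}\,du\ll\frac{(\log M)^{3}}{\sqrt M}.
\]

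Finally I must verify $(\log M)^{3}/\sqrt M=o(\log^{3}L/L)$ using only the hypothesis $L/\sqrt M\to0$. Setting $R:=M/L^{2}$, so that $R\to\infty$ and $\log M=2\log L+\log R$, one has $(\log M)^{3}\ll\log^{3}L+\log^{3}R$, hence
\[
\frac{L(\log M)^{3}}{\sqrt M\,\log^{3}L}\ll\frac{1}{\sqrt R}+\frac{\log^{3}R}{\sqrt R\,\log^{3}L}\longrightarrow 0,
\]
since $\log^{3}R=o(\sqrt R)$ and $\log L\to\infty$. This scaling check is the only nontrivial step in the argument; everything preceding it is a direct repetition of the diagonal/off-diagonal analysis in Lemma \ref{variance}.
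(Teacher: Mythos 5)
Your proof is correct and follows essentially the same route as the paper: the same reduction to the tail sum over $M<n\le T^{1-\delta}$, the same diagonal/off-diagonal analysis, and the same partial summation via \eqref{eq:varformula}. The only (harmless) divergence is at the end, where you bound $\sin^{2}(2\pi y)\le 1$ and verify $(\log M)^{3}/\sqrt{M}=o(\log^{3}L/L)$ directly, whereas the paper keeps the $\sin^{2}$ factor and bounds the tail integral by $L/\sqrt{M}$; your scaling check is in fact slightly more careful at the one point where the paper's ``$\sim$'' is loose.
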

\begin{proof}
We have for any (small) $\delta>0$
\begin{gather}
S\left(x,L\right)-S\left(x,L,M\right)=\label{eq:longminusshort}\\
\frac{-2}{\pi\sqrt{2}}\sum_{M<n\leq T^{1-\delta}}\frac{d\left(n\right)}{n^{3/4}}\sin\left(\frac{2\pi\sqrt{n}}{L}\right)\sin\left(4\pi\sqrt{n}\left(\sqrt x+\frac{1}{2L}\right)-\frac{\pi}{4}\right)\nonumber \\
+O\left(T^{-1/4+\delta}\right).\nonumber
\end{gather}
Denoting the sum in (\ref{eq:longminusshort}) by $P$, we get by
Cauchy-Schwarz's inequality that
\[
\left\langle \left(\frac{S\left(x,L\right)-S\left(x,L,M\right)}{\sigma}\right)^{2}\right\rangle =\frac{1}{\sigma^{2}}\left\langle P^{2}\right\rangle +O\left(\frac{T^{-1/4+\delta}}{\sigma^{2}}\sqrt{\left\langle P^{2}\right\rangle }+\frac{T^{-1/2+2\delta}}{\sigma^{2}}\right)
\]
so (since $L\ll T^\varepsilon$ for all $\varepsilon>0$) it is enough to show that $\left\langle P^{2}\right\rangle =o\left(\sigma^{2}\right)$.

Indeed, by the same reasoning as in the proof of Lemma \ref{variance}
\begin{equation} \label{sigma}
\left\langle P^{2}\right\rangle \sim\frac{16}{\pi^{4}}\frac{\log^{3}L}{L}\int_{\sqrt{M}/L}^{\infty}\frac{\sin^{2}\left(2\pi y\right)}{y^{2}}\mbox{d}y\ll\sigma^{2}\frac{L}{\sqrt{M}}=o\left(\sigma^{2}\right)
\end{equation}
since $L/\sqrt{M}\to0$ as $T\to\infty$.\end{proof}

\subsection{Higher moments}
Define
\[
\sigma_M^2=\frac{1}{\pi^{2}}\sum_{n\leq M}\frac{d^{2}\left(n\right)}{n^{3/2}}\sin^{2}\left(\frac{2\pi\sqrt{n}}{L}\right).
\]
In this section we calculate the moments of $ S(x, L, M)/\sigma_M$ and prove:

\begin{proposition} \label{moments prop}
Let $m \ge 2$ be an integer.
Suppose $M  \ll T^{(1-\epsilon)/(2^{m-1}-1)}$ for some $\epsilon>0$, $L$ tends to infinity with $T$, and $L/\sqrt M \rightarrow 0$. As $T \rightarrow \infty$ we have
\begin{equation}
\begin{split}
\left\langle \left( \frac{S(x, L, M)}{\sigma_M} \right)^m \right \rangle
=
\begin{cases}
 \frac{m!}{2^{m/2} (m/2)! }+o(1) \mbox{ if $m$ is even}, \\
o(1) \mbox{ if $m$ is odd}.
\end{cases}
\end{split}
\end{equation}
\end{proposition}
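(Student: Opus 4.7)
The plan is to compute the $m$-th moment of $S(x,L,M)/\sigma_M$ by expanding the $m$-fold product, applying the averaging operator $\langle \cdot \rangle$, and then separating a ``diagonal'' contribution (whose size accounts for the Gaussian moments) from off-diagonal terms which are killed by the rapid decay of $\widetilde{\omega}$. The outline below mirrors the treatment of $\langle P^2 \rangle$ in the proof of Lemma \ref{variance}, but carried out to the $m$-th power.

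First I would expand $S(x,L,M)^m$. Writing each sine through its exponential form, we get a sum over $(n_1,\ldots,n_m) \in \{1,\ldots,M\}^m$ and sign vectors $\boldsymbol{\epsilon} = (\epsilon_1,\ldots,\epsilon_m) \in \{\pm 1\}^m$ of quantities of the shape
\[
C_{\boldsymbol{\epsilon}} \prod_{j=1}^{m} \frac{d(n_j)}{n_j^{3/4}} \sin\!\left(\frac{2\pi\sqrt{n_j}}{L}\right) \exp\!\left( 4\pi i\Big(\sqrt{x}+\frac{1}{2L}\Big)\sum_{j=1}^{m}\epsilon_j \sqrt{n_j} - \frac{\pi i}{4}\sum_j \epsilon_j\right),
\]
where $C_{\boldsymbol\epsilon}$ is a bounded constant. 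Applying $\langle \cdot \rangle$ produces the factor $\widetilde{\omega}\bigl(-2\sqrt{T}\sum_j \epsilon_j \sqrt{n_j}\bigr)$, which by \eqref{trans bd} decays faster than any polynomial in $|\sqrt{T}\sum_j \epsilon_j \sqrt{n_j}|$ whenever this quantity is not small.

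Next I would separate terms according to whether $\sum_j \epsilon_j \sqrt{n_j}$ vanishes. Call a term \emph{diagonal} if the multiset of signed square roots $\{\epsilon_j \sqrt{n_j}\}$ can be partitioned into pairs $(i,j)$ with $n_i = n_j$ and $\epsilon_i = -\epsilon_j$; otherwise call it \emph{off-diagonal}. Off-diagonal terms are handled using the classical lower bound: if $n_1,\ldots,n_m \le M$ and $\sum_j \epsilon_j \sqrt{n_j} \ne 0$, then $|\sum_j \epsilon_j \sqrt{n_j}| \gg M^{-(2^{m-1}-1)/2}$ (this follows by considering the norm of the nonzero algebraic number $\prod_{\boldsymbol{\eta}}\sum_j \eta_j \sqrt{n_j}$ over all sign choices $\boldsymbol{\eta}$). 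The hypothesis $M \ll T^{(1-\epsilon)/(2^{m-1}-1)}$ ensures that for such configurations
\[
\sqrt{T}\Big|\sum_j \epsilon_j \sqrt{n_j}\Big| \gg T^{\epsilon/2},
\]
so $\widetilde{\omega}(\cdot)$ decays super-polynomially and, after using $d(n_j) \ll n_j^\varepsilon$ and the trivial number of tuples $O(M^m)$, the total off-diagonal contribution is $O(T^{-B})$ for every $B$.

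For the diagonal part, exactly as in the variance calculation it reduces to a sum over perfect matchings of $\{1,\ldots,m\}$; for odd $m$ no such matching exists and the main term is $0$. For even $m$, there are $m!/(2^{m/2}(m/2)!)$ pairings, and each pairing contributes (up to errors from repetitions, where some $n_i$ coincide across distinct pairs, a lower-order term) a factor of
\[
\frac{1}{\pi^2}\sum_{n \le M}\frac{d^2(n)}{n^{3/2}}\sin^2\!\left(\frac{2\pi\sqrt{n}}{L}\right) = \sigma_M^2,
\]
coming from the combinatorial identity $\frac{1}{4}(e^{i\theta}e^{-i\theta}+e^{-i\theta}e^{i\theta}) = \frac{1}{2}$ and $\widetilde\omega(0)=1$. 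Hence the diagonal total equals $\bigl(m!/(2^{m/2}(m/2)!)\bigr)\sigma_M^m + o(\sigma_M^m)$, giving the claim after division by $\sigma_M^m$.

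The main obstacle is the off-diagonal estimate: one must justify that the rapid decay of $\widetilde{\omega}$ beats the combinatorial explosion $M^m$ times divisor-function factors. This is exactly what the hypothesis $M \ll T^{(1-\epsilon)/(2^{m-1}-1)}$ is engineered to do, via the lower bound on nonzero linear forms in $\sqrt{n_j}$'s. A secondary technical point is showing that the near-diagonal terms where some $n_i$ coincide across different pairs (so the pairing is non-unique) are negligible; here one uses $\sigma_M \asymp L^{-1/2}\log^{3/2} L \to 0$ very slowly compared to the saving from extra coincidences, so these terms are $o(\sigma_M^m)$ by a straightforward counting argument.
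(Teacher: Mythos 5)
There is a genuine gap in your treatment of the diagonal. You declare a term \emph{diagonal} when the signed square roots pair off as $n_i=n_j$, $\epsilon_i=-\epsilon_j$, and \emph{off-diagonal} otherwise, and you then control the off-diagonal terms by the lower bound $|\sum_j\epsilon_j\sqrt{n_j}|\gg M^{-(2^{m-1}-1)/2}$, which is only valid when the linear form is \emph{nonzero}. But these two dichotomies do not coincide: the numbers $\sqrt{n}$ for general $n\le M$ are not linearly independent over $\mathbb Q$ (only the square roots of squarefree integers are, by Besicovitch's theorem, Lemma \ref{lin indp}). For instance $\sqrt{2}+\sqrt{2}+\sqrt{2}-\sqrt{18}=0$, so for $m=4$ the tuple $(2,2,2,18)$ with signs $(+,+,+,-)$ has a vanishing linear form without being a pairing. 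Such configurations are ``off-diagonal'' in your sense, yet $\widetilde\omega(0)=1$ provides no decay, so your off-diagonal estimate fails for them; and they are not counted by your matching combinatorics either. These resonances among non-squarefree integers are exactly the delicate part of the problem.

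The paper resolves this by writing $n=qf^2$ with $q$ squarefree and replacing $e(2\sqrt{nx})$ by $X(q)^f$ for independent uniform $X(q)$ on the unit circle (Lemma \ref{mean values}): the true diagonal $\sum_j\epsilon_j\sqrt{n_j}=0$ then corresponds precisely to the expectation in this random model, which automatically accounts for all relations $\sum_{j:q_j=q}\epsilon_jf_j=0$ within each squarefree class. The Gaussian moments then emerge from a partition argument over which indices share the same $q$, and the whole point of Lemma \ref{Y bd} is to show that the classes containing three or more indices --- i.e.\ exactly the non-pairing resonances you have misclassified --- contribute $O(L^{-1+\varepsilon})$ after normalization by $\sigma_M^m\asymp(\log^3L/L)^{m/2}$. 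That bound requires a genuine computation (a multiple integral estimate after rescaling $u_j=f_j\sqrt q/L$), not just a counting argument; your proposal as written has no mechanism to control these terms. Your off-diagonal analysis for genuinely nonvanishing linear forms, and your count of $m!/(2^{m/2}(m/2)!)$ pairings with distinct squarefree kernels, do match the paper.
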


We first cite two auxiliary lemmas.

\begin{lemma}[Theorem 2 of \cite{B}] \label{lin indp}
Let $q$ denote a square free, positive integer. The set $\{ \sqrt{q} \}$ is linearly independent over $\mathbb Q$.
\end{lemma}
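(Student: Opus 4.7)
The plan is to prove this classical theorem (essentially Besicovitch's) by reducing to a finite-dimensional field-theoretic statement. Any purported nontrivial $\mathbb{Q}$-linear relation among the $\sqrt{q}$ involves only finitely many terms, and the corresponding $q$'s are supported on a finite set of primes $p_1, \ldots, p_n$. It therefore suffices to prove: for distinct primes $p_1, \ldots, p_n$, the $2^n$ elements $\{\sqrt{d} : d \mid p_1 \cdots p_n, \, d \text{ squarefree}\}$ (including $\sqrt{1} = 1$) are linearly independent over $\mathbb{Q}$. I will deduce this from the stronger assertion $[K_n : \mathbb{Q}] = 2^n$ where $K_n = \mathbb{Q}(\sqrt{p_1}, \ldots, \sqrt{p_n})$: these $2^n$ elements generate $K_n$ as a $\mathbb{Q}$-vector space, and matching generator count to dimension forces them to be a basis.

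The degree computation goes by induction on $n$. The base case $n=1$ is just the irrationality of $\sqrt{p_1}$. For the inductive step, assume $[K_{n-1} : \mathbb{Q}] = 2^{n-1}$, so that $K_{n-1}/\mathbb{Q}$ is Galois with group isomorphic to $(\mathbb{Z}/2\mathbb{Z})^{n-1}$, generated by automorphisms $\sigma_i$ determined by $\sigma_i(\sqrt{p_j}) = (-1)^{\delta_{ij}} \sqrt{p_j}$. I must show $\sqrt{p_n} \notin K_{n-1}$. Suppose for contradiction that it is. Since each $\sigma_i$ fixes $p_n$, one has $\sigma_i(\sqrt{p_n}) = \pm \sqrt{p_n}$; let $I = \{i : \sigma_i(\sqrt{p_n}) = -\sqrt{p_n}\}$. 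If $I = \emptyset$ then $\sqrt{p_n}$ is fixed by the full Galois group, hence lies in $\mathbb{Q}$, an absurdity. Otherwise, set $\beta = \sqrt{p_n} \prod_{i \in I} \sqrt{p_i}$. For $j \in I$ the two sign flips $\sigma_j(\sqrt{p_n}) = -\sqrt{p_n}$ and $\sigma_j(\sqrt{p_j}) = -\sqrt{p_j}$ cancel, and for $j \notin I$ no sign flip occurs, so $\sigma_j(\beta) = \beta$ for every $j$ and therefore $\beta \in \mathbb{Q}$. But then $\beta^2 = p_n \prod_{i \in I} p_i$ is a squarefree positive integer strictly greater than $1$, which cannot be a rational square --- the required contradiction.

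The only mildly delicate point is the sign bookkeeping used to show $\beta$ is Galois-fixed, which is straightforward once $I$ is defined correctly. Everything else is routine induction plus the basic Galois theory of the multi-quadratic extension. Since the paper merely cites this result (attributing it to Besicovitch), the above sketch is really all that is needed; the full write-up would just expand the sign computation.
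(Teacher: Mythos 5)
Your proof is correct, but note that the paper does not prove Lemma~\ref{lin indp} at all: it is quoted as Theorem 2 of Besicovitch's 1940 paper and used as a black box, so there is no internal argument to compare against. Your Galois-theoretic route --- reduce a hypothetical finite relation to a finite set of primes, prove $[\mathbb{Q}(\sqrt{p_1},\dots,\sqrt{p_n}):\mathbb{Q}]=2^n$ by induction, and derive a contradiction from $\sqrt{p_n}\in K_{n-1}$ by exhibiting a Galois-fixed element $\beta$ with $\beta^2$ a squarefree integer greater than $1$ --- is a standard, complete proof of the special case the paper needs; the sign bookkeeping for $\sigma_j(\beta)=\beta$ is right, and passing from ``basis'' to ``linear independence of the $2^n$ spanning elements'' is legitimate. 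Two points you should make explicit in a full write-up: (i) the identification of $\mathrm{Gal}(K_{n-1}/\mathbb{Q})$ with all of $(\mathbb{Z}/2\mathbb{Z})^{n-1}$ (so that every sign pattern, in particular each $\sigma_i$, is actually realized) follows from injectivity of the sign map together with the inductive order count $2^{n-1}$; (ii) the final step uses that a rational number whose square is an integer is itself an integer. By contrast, Besicovitch's original argument is elementary (no Galois theory) and establishes a more general statement about fractional powers $\sqrt[n]{a}$; your argument trades that generality for a shorter treatment of the quadratic case, which is all that is used here.
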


\begin{lemma}[Lemma 3.5 of \cite{HR}]
For $j=1, \ldots, m$, let $n_j \le M$ be positive integers and let $\epsilon_j \in \{-1,1\}$ be such that
\[
\sum_{j=1}^{m} \epsilon_j \sqrt{n_j} \neq 0.
\]
Then,
\[
\bigg|\sum_{j=1}^{m} \epsilon_j \sqrt{n_j} \, \bigg| \geq \frac{1}{(m \sqrt{M})^{2^{m-1}-1}}.
\]
\end{lemma}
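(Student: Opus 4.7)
The plan is to view $\alpha := \sum_{j=1}^{m} \epsilon_j \sqrt{n_j}$ as an algebraic integer in a multi-quadratic number field and to apply the classical fact that a nonzero rational integer has absolute value at least one to its Galois norm. First, I would put $\alpha$ into canonical form: writing each $n_j = a_j^{2} s_j$ with $s_j$ squarefree and collecting terms by squarefree part, I obtain
\[
\alpha \;=\; c_0 + \sum_{i=1}^{k'} c_i \sqrt{\sigma_i}, \qquad c_0, c_i \in \mathbb{Z},
\]
where $\sigma_1,\dots,\sigma_{k'}$ are the distinct squarefree integers $>1$ among the $s_j$, $c_i = \sum_{j:\,s_j=\sigma_i} \epsilon_j a_j$, and $c_0 = \sum_{j:\,s_j=1} \epsilon_j a_j$. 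By Lemma \ref{lin indp} (which forces $\{1,\sqrt{\sigma_1},\dots,\sqrt{\sigma_{k'}}\}$ to be $\mathbb{Q}$-linearly independent), $\alpha \neq 0$ is equivalent to $(c_0,c_1,\dots,c_{k'}) \neq 0$.

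The main tool will be the field norm
\[
N(\alpha) \;=\; \prod_{\tau \in G} \tau(\alpha) \;\in\; \mathbb{Z}, \qquad G \;=\; \mathrm{Gal}(K/\mathbb{Q}) \;\cong\; (\mathbb{Z}/2\mathbb{Z})^{k'},
\]
taken over the compositum $K = \mathbb{Q}(\sqrt{\sigma_1},\dots,\sqrt{\sigma_{k'}})$. Since $\alpha$ is a nonzero algebraic integer, $|N(\alpha)| \geq 1$. Each conjugate is uniformly controlled by
\[
|\tau(\alpha)| \;\leq\; |c_0| + \sum_{i} |c_i|\sqrt{\sigma_i} \;\leq\; \sum_{j=1}^{m} \sqrt{n_j} \;\leq\; m\sqrt{M},
\]
where the middle inequality uses the telescoping estimate $|c_i|\sqrt{\sigma_i} \leq \sum_{j:\,s_j=\sigma_i}\sqrt{n_j}$ and its analogue for $c_0$. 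Combining these two inequalities naively gives $|\alpha| \geq (m\sqrt{M})^{-(2^{k'}-1)}$, which matches the target exponent $2^{m-1}-1$ only if $k' \leq m-1$.

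To recover the sharper exponent in the remaining case $k' = m$, I would deploy the total-sign-flip involution $\iota \in G$ that negates every $\sqrt{\sigma_i}$. When $k' = m$, necessarily every $n_j$ contributes a nontrivial square root and $c_0 = 0$, so $\iota(\alpha) = -\alpha$; since $G$ is abelian this forces $|\iota\tau(\alpha)| = |\tau(\alpha)|$ for every $\tau$. Grouping the $2^{k'}$ factors of $N(\alpha)$ into the $2^{k'-1}$ cosets of $\{1,\iota\}$ then yields
\[
|N(\alpha)| \;=\; |\alpha|^{2} \prod_{[\tau]\neq[\mathrm{Id}]}|\tau(\alpha)|^{2},
\]
a product of $2^{k'-1}$ squared factors, one of which is $|\alpha|^{2}$; combined with the conjugate bound this gives $|\alpha| \geq (m\sqrt{M})^{-(2^{k'-1}-1)} \geq (m\sqrt{M})^{-(2^{m-1}-1)}$ since $k' \leq m$. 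The only mildly delicate point is this bifurcation between the pairing regime ($c_0 = 0$, where the factor of two in the exponent is saved by $\iota$) and the non-pairing regime ($c_0 \neq 0$, where at least one $n_j$ is a perfect square and so $k' \leq m-1$ already suffices without pairing); in both regimes the stated bound is attained.
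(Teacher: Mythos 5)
The paper does not actually prove this lemma; it is imported verbatim as Lemma 3.5 of \cite{HR}, so there is no internal argument to compare yours against. Your norm-of-an-algebraic-integer strategy is in the same spirit as the standard proof (a product of $2^{m-1}$ sign-flipped conjugates, each bounded by $m\sqrt{M}$, whose product is a nonzero rational integer), and several of your steps are correct as written: the reduction to distinct squarefree parts, the conjugate bound $|\tau(\alpha)|\le m\sqrt{M}$, and the observation that $c_0\neq 0$ forces some $n_j$ to be a perfect square and hence $k'\le m-1$.

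There is, however, one genuine gap: you assert that $\mathrm{Gal}(K/\mathbb{Q})\cong(\mathbb{Z}/2\mathbb{Z})^{k'}$ and, crucially, that there is an involution $\iota$ negating every $\sqrt{\sigma_i}$ simultaneously. This is false when the $\sigma_i$ are multiplicatively dependent modulo squares: for $\sigma_1=2$, $\sigma_2=3$, $\sigma_3=6$ one has $K=\mathbb{Q}(\sqrt{2},\sqrt{3},\sqrt{6})=\mathbb{Q}(\sqrt{2},\sqrt{3})$ of degree $4$, not $8$, and since $\sqrt{6}=\sqrt{2}\sqrt{3}$ no automorphism negates all three square roots. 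Besicovitch's theorem (Lemma \ref{lin indp}) gives $\mathbb{Q}$-linear independence of the $\sqrt{\sigma_i}$, not independence of the classes of the $\sigma_i$ in $\mathbb{Q}^{\times}/(\mathbb{Q}^{\times})^{2}$, which is what your description of $G$ requires. Consequently, in your key case $k'=m$, $c_0=0$, the pairing step may invoke a nonexistent $\iota$ (take $\alpha=\sqrt{2}+\sqrt{3}-\sqrt{6}$ with $m=3$). The repair is short: let $r$ be the $\mathbb{F}_2$-rank of the subgroup of $\mathbb{Q}^{\times}/(\mathbb{Q}^{\times})^{2}$ generated by the $\sigma_i$, so that $[K:\mathbb{Q}]=2^{r}$ with $r\le k'\le m$. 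If $r\le m-1$, the naive norm bound over the true Galois group already yields $|\alpha|\ge(m\sqrt{M})^{-(2^{r}-1)}\ge(m\sqrt{M})^{-(2^{m-1}-1)}$, with no pairing needed. If $r=m$, then the $\sigma_i$ are genuinely independent modulo squares, forcing $k'=m$ and $c_0=0$; the involution $\iota$ then really does exist, and your coset-pairing computation goes through verbatim. With the case split made on $r$ rather than on $k'$ (or on $c_0$), the proof is complete.
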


Let $\{ X(q) \}_q$ be a sequence of independent random variables uniformly distributed on the unit circle,
where the index $q$ runs over the square-free numbers.

\begin{lemma} \label{mean values}
Let $a_n, b_n$ be complex numbers such that $a_n, b_n \ll 1$. Also, let $0<\epsilon<1/2$ and $k, \ell \geq 0$
be integers.
Suppose $M \leq T^{(1-\epsilon)/(2^{k+\ell-1}-1)}$. 
Write $n=qf^2$ where $q$ is the square-free part of $n$ and let $\{X(q)\}_q$ 
be a sequence of independent random variables uniformly distributed on the unit circle.
We have for any $A \ge 1$ that
\begin{equation} \notag
\begin{split}
&
\left\langle \bigg(\sum_{n \le M}
a_n e(2\sqrt{nx} ) \bigg)^{k}
\overline{\bigg(\sum_{n \le M} b_n e(2\sqrt{nx})\bigg)}^{\ell}  \right\rangle
\\
& \qquad \qquad \qquad \qquad
= \mathbb E \bigg( \Big(\sum_{qf^2 \le M}
a_{qf^2} (X(q))^f \bigg)^{k}
\overline{\Big(\sum_{qf^2 \le M} b_{qf^2} (X(q))^f\bigg)}^{\ell} \bigg)+O(T^{-A}),
\end{split}
\end{equation}
where the implied constant depends on $k, \ell, A,$
and $\epsilon$.
\end{lemma}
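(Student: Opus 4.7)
The plan is to expand both products into a multiple exponential sum and integrate against $\omega$ term by term. Set $m = k+\ell$ and choose signs $\epsilon_j = +1$ for $1 \le j \le k$, $\epsilon_j = -1$ for $k < j \le k+\ell$. Then the product inside $\langle \cdot \rangle$ equals
\[
\sum_{\vec n \in [1,M]^m} c_{\vec n}\, e\bigl(2\sqrt{x}\,\xi_{\vec n}\bigr),\qquad \xi_{\vec n}=\sum_{j=1}^m \epsilon_j \sqrt{n_j},\quad c_{\vec n}=\prod_{j\le k} a_{n_j} \prod_{j>k}\overline{b_{n_j}}.
\]
A direct change of variables in the definition of $\langle\cdot\rangle$ shows that $\langle e(2\sqrt{x}\,\xi_{\vec n})\rangle = \widetilde\omega(-2\sqrt{T}\,\xi_{\vec n})$. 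I would then split the sum according to whether $\xi_{\vec n}=0$.

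For the off-diagonal tuples with $\xi_{\vec n}\neq 0$, Lemma 3.5 of \cite{HR} yields the gap bound $|\xi_{\vec n}|\ge (m\sqrt M)^{-(2^{m-1}-1)}$. The hypothesis $M\le T^{(1-\epsilon)/(2^{k+\ell-1}-1)}$ then forces $|2\sqrt T\,\xi_{\vec n}|\gg_m T^{\epsilon/2}$, and the rapid-decay estimate \eqref{trans bd} makes each such $\widetilde\omega(-2\sqrt T\,\xi_{\vec n})$ bounded by $O_B(T^{-B})$ for any $B\ge 1$. Since $c_{\vec n}\ll 1$ and there are at most $M^m \ll T^{O(1)}$ tuples, choosing $B$ large enough absorbs the whole off-diagonal contribution into the required error $O(T^{-A})$.

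For the diagonal tuples with $\xi_{\vec n}=0$, I would write $n_j = q_j f_j^2$ with $q_j$ squarefree. Linear independence of $\{\sqrt{q}\}$ over $\mathbb Q$ (Lemma \ref{lin indp}) reduces $\sum_j \epsilon_j f_j \sqrt{q_j}=0$ to the system $\sum_{j:q_j=q}\epsilon_j f_j = 0$ for each squarefree $q$ appearing in $\vec n$. On these tuples the averaged phase is $\widetilde\omega(0)=\int\omega=1$, so the diagonal contribution on the analytic side is exactly $\sum_{\vec n\text{ diag}} c_{\vec n}$. Expanding the right-hand side of the lemma identically and using independence of the $X(q)$ gives the same sum: each tuple contributes $c_{\vec n}\cdot \prod_{q} \mathbb{E}\bigl[X(q)^{\sum_{j:q_j=q}\epsilon_j f_j}\bigr]$, and since $\mathbb E[X(q)^r]$ equals $1$ when $r=0$ and $0$ otherwise, the surviving tuples are precisely the diagonal ones characterized above, with matching coefficients. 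Thus both sides agree on the diagonal and differ only by $O(T^{-A})$.

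The main obstacle, and the source of the peculiar exponent $2^{k+\ell-1}-1$ in the hypothesis, is the balancing act in step two: the combinatorial factor $M^m$ must be beaten by the saving obtained from the minimal gap $|\xi_{\vec n}|\gtrsim M^{-(2^{m-1}-1)/2}$. The HR gap lemma is exactly what controls this, and the assumption $M \le T^{(1-\epsilon)/(2^{m-1}-1)}$ is the precise threshold that makes $|2\sqrt T\,\xi_{\vec n}|$ grow like a fixed positive power of $T$; everything else in the argument is routine bookkeeping with linear independence and unit-circle moments.
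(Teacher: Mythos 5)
Your proposal is correct and follows essentially the same route as the paper: term-by-term integration against $\omega$, a diagonal/off-diagonal split according to whether $\sum_j \epsilon_j\sqrt{n_j}$ vanishes, the Besicovitch linear-independence lemma together with the unit-circle moment identity $\mathbb E[X(q)^a\overline{X(q)}^b]=\mathbf 1_{a=b}$ to match the diagonal with the random model, and the Hughes--Rudnick gap lemma plus the rapid decay of $\widetilde\omega$ to absorb the $O(M^{k+\ell})$ off-diagonal terms into $O(T^{-A})$. Nothing further is needed.
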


\begin{proof}
Integrating term-by-term gives
\begin{equation}
\begin{split}
\int_{\mathbb R}
&\bigg(\sum_{n \le M}
a_n e(2\sqrt{nx} ) \bigg)^{k}
\overline{\bigg(\sum_{n \le M} b_n e(2\sqrt{nx} )\bigg)}^{\ell} \omega\Big( \frac{t}{T}\Big) \, \frac{dt}{T}\\
& \qquad \qquad \qquad =
\sum_{\substack{n_1, \ldots, n_k \le M \\ m_1, \ldots, m_{\ell} \le M }}
\prod_{r=1}^k a_{n_r} \prod_{s=1}^{\ell} \overline{b_{m_s}} \cdot \widetilde \omega \Big(2\sqrt{T}\Big(
\sum_{s=1}^{\ell} \sqrt{m_s}
-\sum_{r=1}^k \sqrt{n_r}\Big)\Big)\\
& \qquad \qquad \qquad =
\Sigma_D+\Sigma_O,
\end{split}
\end{equation}
where $\Sigma_D$ contains the terms where $\sum_{s=1}^{\ell} \sqrt{m_s}
-\sum_{r=1}^k \sqrt{n_r}=0$ and $\Sigma_O$ consists of the remaining terms.

For non negative integers $a$ and $b$
\begin{equation} \notag
\mathbb E\left( X (q)^a \overline{X(q)}^b \right)=
\begin{cases}
  1 & \mbox{if  } a = b, \\
  0 & \mbox{if  } a \neq b.
\end{cases}
\end{equation}
Writing $n_r=q_rf_r^2$ where $q_r$ is square-free, we have
by Lemma \ref{lin indp} and the independence of the random variables $X(q_r)$ that
\[
\Sigma_D=\mathbb E \bigg( \Big(\sum_{qf^2 \le M}
a_{qf^2} (X(q))^f\bigg)^{k}
\overline{\Big(\sum_{qf^2 \le M} b_{qf^2} (X(q))^f\bigg)}^{\ell} \bigg).
\]

Next, note that by Lemma \ref{mean values}
for each term in $\Sigma_O$ we have
\[
\bigg|\sum_{s=1}^{\ell} \sqrt{m_s}
-\sum_{r=1}^k \sqrt{n_r}\bigg|\geq \frac{1}{((k+\ell) \sqrt{M})^{2^{k+\ell-1}-1}}.
\]
Since, by \eqref{trans bd} $\widetilde \omega$ 
decays rapidly
and that $a_n, b_n \ll 1$ we have
for any $N \ge 1$
\begin{equation} \notag
\begin{split}
\Sigma_O
\ll&
\sum_{\substack{n_1, \ldots, n_k \le M \\ m_1, \ldots, m_{\ell} \le M }}
\widetilde \omega \Big(2\sqrt{T}\Big(
\sum_{s=1}^{\ell} \sqrt{m_s}
-\sum_{r=1}^k \sqrt{n_r}\Big)\Big) \\
\ll &
\bigg(\frac{((k+\ell) \sqrt{M})^{2^{k+\ell-1}-1}}{\sqrt{T}} \bigg)^N M^{k+\ell} \ll T^{-A}
\end{split}
\end{equation}
by our assumption on $M$ and since $N$ is arbitrary. Here the implied constant depends on $k, \ell, A,$ and $\epsilon$.
\end{proof}

Let
\[
Y(q) =\frac{-2}{ q^{3/4} \pi \sqrt 2}\sum_{f \leq \sqrt{M/q}} \frac{d(qf^2)}{f^{3/2}} e\Big(\frac{f\sqrt{q}}{L}-\frac18\Big)
\sin\Big(2\pi  \frac{f\sqrt q}{L}  \Big) (X (q))^f.
\]

\begin{lemma} \label{Y bd} Suppose $m \geq 2$.
For any $\delta>0$ we have
\[
\mathbb E \Big((\tmop{Im} Y(q))^m\Big)  
\ll
\begin{cases}
\frac{1}{q^{3m/4-m\delta}} \quad \mbox{ if } \quad
q>L^2 \\
\frac{1}{q^{\frac{(m-1)}{2}} L^{\frac{m}{2}+1-2m\delta}} \quad \mbox{ if } \quad
q \le L^2,
\end{cases}
\]
where the implied constant depends on $\delta$ and $m$.
\end{lemma}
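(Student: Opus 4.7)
The plan is to start from $|\mathbb{E}((\tmop{Im} Y(q))^m)| \le \mathbb{E}(|Y(q)|^m)$ and then use the elementary pointwise inequality
\[
|Y(q)|^m \le \|Y(q)\|_\infty^{m-2} \cdot |Y(q)|^2,
\]
which holds since $m \ge 2$. Writing $Y(q) = \sum_f a_f (X(q))^f$ with
\[
a_f = -\frac{2}{q^{3/4}\pi\sqrt 2} \cdot \frac{d(qf^2)}{f^{3/2}} \, e\!\left(\frac{f\sqrt q}{L} - \frac{1}{8}\right) \sin\!\left(\frac{2\pi f\sqrt q}{L}\right),
\]
we immediately have $\|Y(q)\|_\infty \le \sum_f |a_f| =: A_1(q)$ deterministically (since $|X(q)| = 1$), and orthonormality of $\{(X(q))^f\}_{f \ge 1}$ in $L^2$ of the probability space gives $\mathbb{E}(|Y(q)|^2) = \sum_f |a_f|^2 =: A_2(q)$. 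The problem thus reduces to bounding $A_1(q)^{m-2} A_2(q)$, since $\mathbb{E}(|Y(q)|^m) \le A_1(q)^{m-2} A_2(q)$.

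Invoking $d(qf^2) \ll (qf)^\delta$, the starting point in both cases is
\[
|a_f| \ll \frac{(qf)^\delta}{q^{3/4} f^{3/2}} \min\!\left\{1,\, \frac{f\sqrt q}{L}\right\}.
\]
If $q > L^2$ then $f\sqrt q / L > 1$ for every $f \ge 1$, so the minimum equals $1$ and summing in $f$ (convergent because of the $f^{-3/2+\delta}$ tail) gives $A_1(q) \ll q^{-3/4 + \delta}$ and $A_2(q) \ll q^{-3/2 + 2\delta}$. Multiplying,
\[
A_1(q)^{m-2} A_2(q) \ll q^{-3(m-2)/4 - 3/2 + (m-1)\delta} = q^{-3m/4 + (m-1)\delta},
\]
which (after relabelling $\delta$) establishes the first case of the lemma.

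If $q \le L^2$ we split the range of $f$ at $f_0 := L/\sqrt q$, using $|\sin(2\pi f \sqrt q / L)| \le 2\pi f\sqrt q/L$ for $f \le f_0$ and $|\sin| \le 1$ for $f > f_0$. Routine estimates of $\sum_f f^{-\alpha}$ in each subrange (whose contributions turn out to balance at $f_0$) yield $A_1(q) \ll q^{-1/2+\delta} L^{-1/2+\delta}$ and $A_2(q) \ll q^{-1/2+\delta} L^{-2+O(\delta)}$. Multiplying gives
\[
A_1(q)^{m-2} A_2(q) \ll q^{-(m-1)/2 + (m-1)\delta} L^{-m/2 - 1 + O(m\delta)},
\]
and the hypothesis $q \le L^2$ allows the factor $q^{(m-1)\delta}$ to be absorbed into $L^{O(m\delta)}$; after rescaling $\delta$ this produces the bound $q^{-(m-1)/2} L^{-m/2 - 1 + 2m\delta}$ claimed in the second case.

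The only point requiring any care is the bookkeeping in the second case: one must check that the splitting at $f_0$ does give the stated balance between the contributions to $A_1(q)$ and $A_2(q)$, and keep all $\delta$-exponents under control through the multiplications. A reassuring consistency check is that the two bounds of the lemma match at the boundary $q = L^2$, each giving $L^{-3m/2}$, which reflects the balance of the two regimes at this transition; beyond this careful accounting there is no genuine analytic obstacle.
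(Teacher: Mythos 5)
Your proof is correct, and for the main case $q\le L^2$ it takes a genuinely different route from the paper. The paper expands $\mathbb E\big(Y(q)^k\overline{Y(q)}^{\ell}\big)$ for $k+\ell=m$, observes that the expectation vanishes unless the frequencies satisfy $f_1+\cdots+f_k=f_{k+1}+\cdots+f_m$, solves for one variable, compares the resulting $(m-1)$-fold constrained sum with a multiple integral, and rescales $u_j=x_j\sqrt q/L$ to extract the powers of $q$ and $L$. Your interpolation $\mathbb E\big(|Y(q)|^m\big)\le \|Y(q)\|_\infty^{m-2}\,\mathbb E\big(|Y(q)|^2\big)\le A_1(q)^{m-2}A_2(q)$ bypasses that combinatorial/Diophantine analysis entirely and only requires the $\ell^1$ and $\ell^2$ norms of the coefficients, which you correctly evaluate by splitting at $f_0=L/\sqrt q$; I checked that $A_1\ll q^{-1/2}L^{-1/2+O(\delta)}$ and $A_2\ll q^{-1/2}L^{-2+O(\delta)}$ in that range, so the product reproduces the exponents $q^{-(m-1)/2}L^{-m/2-1}$ exactly, and in the range $q>L^2$ your argument degenerates to the same trivial sup-norm bound the paper uses. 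What your approach buys is brevity and the avoidance of the constrained lattice sum; what it gives up is the extra information that the unmixed moments $\mathbb E(Y^m)$ vanish, which the lemma does not need. The only blemishes are bookkeeping: $d(qf^2)\ll (qf^2)^{\delta}$ rather than $(qf)^{\delta}$, and your exponent $(m-1)\delta$ in the first case should be $m\delta$; both are harmless after relabelling $\delta$, as you note.
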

\begin{proof}
Applying the bound $d(n) \ll n^{\delta}$
for any $\delta>0$, we have
\[
Y(q) \ll \frac{1}{q^{3/4-\delta}}.
\]
The claimed estimate for $q>L^2$ follows.

We now assume $q \le L^2$. It suffices to bound the mixed moments
\[
\mathbb E \Big(Y(q)^k \overline{Y(q)}^{\ell} \Big),
\]
with $k+\ell=m$. Note that if either $k$ or $\ell$ equals zero then the expectation also equals zero.
Next, consider the case where both $k$ and $\ell$ are positive, we have
\[
\mathbb E \Big(Y(q)^k \overline{Y(q)}^{\ell} \Big)=
\frac{(-1)^m 2^{m/2}}{q^{3m/4} \pi^m}\sum_{\substack{f_1, \ldots, f_m \leq \sqrt{M/q}
\\ f_1+\cdots+f_k=f_{k+1}+\cdots+f_{m}}} \prod_{j=1}^m \frac{d(qf_j^2)}{f_j^{3/2}} e\Big(\frac{f_j\sqrt{q}}{L}-\frac18\Big)
\sin\Big(2\pi f_j \frac{\sqrt q}{L}  \Big).
\]

Let $\epsilon_r=1$ for $r=1,\ldots, k$
and $\epsilon_r=-1$ for $r=k+1, \ldots, m-1$.
By the last condition on the sum we have $f_m=\sum_{r} \epsilon_r f_r$. Since we also have $f_m, f_1 \ge 1$
it follows
that $f_1 \ge 1+\max(0,- \sum_{2 \le j \le m-1} \epsilon_j f_j)$.
Let $\mathbf f=(f_2, \ldots, f_{m-1}) \in \mathbb Z^{m-2}$, and write $g(\mathbf f)=\max(0,-\sum_{2 \le j \le m-1}\epsilon_j f_j)$. Also, apply the bound
$d(n) \ll n^{\delta}$.
Thus,
\[
\mathbb E \Big(Y(q)^k \overline{Y(q)}^{\ell} \Big) \ll \frac{1}{q^{3m/4-m\delta}}\sum_{f_2, \ldots,  f_{m-1}\geq 1}
\sum_{\substack{f_1\geq 1+g(\mathbf f)}}
\frac{\Big|\sin\Big(2\pi (\sum_{ r } \epsilon_r f_r) \frac{\sqrt q}{L}  \Big)\Big|}{(\sum_{r} \epsilon_r f_r)^{3/2-2\delta}}  \prod_{j=1}^{m-1} \frac{\Big|\sin\Big(2\pi f_j \frac{\sqrt q}{L}  \Big)\Big|}{f_j^{3/2-2\delta}}.
\]
Since $q \le L^2$ the right-hand side is
\[
\ll \frac{1}{q^{3m/4-m\delta}}\int_{1}^{\infty} \cdots \int_1^{\infty}\int_{1+g(\mathbf x)}^{\infty}
\frac{\Big|\sin\Big(2\pi (\sum_{ r } \epsilon_r x_r) \frac{\sqrt q}{L}  \Big)\Big|}{(\sum_{r} \epsilon_r x_r)^{3/2-2\delta}}  \prod_{j=1}^{m-1} \frac{\Big|\sin\Big(2\pi x_j \frac{\sqrt q}{L}  \Big)\Big|}{x_j^{3/2-2\delta}} dx_1 \, dx_2 \ldots \, dx_{m-1}.
\]

Next, make the change of variables $u_j=x_j \sqrt{q}/L$ to see that
\begin{equation} \notag
\begin{split}
& \mathbb E \Big(Y(q)^k \overline{Y(q)}^{\ell} \Big) \ll \frac{1}{q^{\frac{(m-1)}{2}} L^{\frac{m}{2}+1-2m\delta}}
\int_{\frac{\sqrt{q}}{L}}^{\infty} \cdots \int_{\frac{\sqrt{q}}{L}}^{\infty} \int_{\frac{\sqrt{q}}{L}+g( \mathbf u)}^{\infty}
\prod_{j=1}^{m-1} \frac{\big|\sin\big(2\pi u_j  \big)\big|}{u_j^{3/2-2\delta}}  \\
& \qquad \qquad \qquad \qquad \qquad \qquad \qquad \times
\frac{\Big|\sin\Big(2\pi (\sum_{r} \epsilon_r u_r)  \Big)\Big|}{(\sum_{r} \epsilon_r u_r)^{3/2-2\delta}}
\, du_1 \, du_2 \ldots \,du_{m-1}.
\end{split}
\end{equation}
The multiple integral is $O(1)$. Hence, we have
for $q \le L^2$ that
\[
\mathbb E \Big(Y(q)^k \overline{Y(q)}^{\ell} \Big) \ll \frac{1}{q^{\frac{(m-1)}{2}} L^{\frac{m}{2}+1-2m\delta}}.
\]

\end{proof}

\begin{proof}[Proof of Proposition \ref{moments prop}]
By Lemma \ref{mean values} it suffices to estimate
\[
\mathbb E \Big( \Big(\frac{1}{\sigma_M} \cdot \sum_{q \le M} \tmop{Im} Y(q)\Big)^m\Big)= \frac{1}{\sigma_M^m} \cdot
\sum_{q_1, \ldots, q_m \le M} \mathbb E \Big(\prod_{j=1}^m \tmop{Im} Y(q_j)\Big).
\]
We analyze this sum in the following way. Consider a division of $\{1,\ldots,m\}$
into nonempty disjoint subsets $S_1,\ldots,S_n$ with cardinalities $\alpha_1,\ldots,\alpha_n$
such that $\alpha_1+\cdots+\alpha_n=m$. Given such a division, look at the contribution of the terms
in the above sum over $q_1,\ldots,q_m$ such that $q_a=q_b$ if $a,b \in S_j$ for some $j$
and $q_a \ne q_b$ if $a \in S_j$ and $b \in S_i$ with $i \ne j$.
Since the random variables $Y(q)$ are independent the sum of these terms equals
\[
\sum_{\substack{r_1, \dots, r_n \le M  \\ r_j \, \, \mbox{\tiny{distinct}}}}
\prod_{j=1}^n \mathbb E \Big(\Big(\frac{1}{\sigma_M} \tmop{Im} Y(r_j)\Big)^{\alpha_j}\Big).
\]
Thus,
\begin{equation} \label{comb}
\frac{1}{\sigma_M^m} \cdot
\sum_{q_1, \ldots, q_m \le M} \mathbb E \Big(\prod_{j=1}^m \tmop{Im} Y(q_j)\Big)
=\sum_{n=1}^{m} \sum_{\alpha_j} \frac{m!}{\alpha_1! \cdots \alpha_n!} \cdot \frac{1}{n!}
\sum_{\substack{r_1, \dots, r_n \le M  \\ r_j \, \, \mbox{\tiny{distinct}}}}
\prod_{j=1}^n \mathbb E \Big(\Big(\frac{1}{\sigma_M} \tmop{Im} Y(r_j)\Big)^{\alpha_j}\Big),
\end{equation}
where $\sum_{\alpha_j}$ runs over all $n$-tuples of positive integers $(\alpha_1,\ldots,\alpha_n)$
such that $\alpha_1 +\cdots+\alpha_n=m$. 

 Next note that inside the inner sum in \eqref{comb}  if $\alpha_j=1$ for some $j$ then this term vanishes.
Additionally, if $\ell \geq 3$ we have for any $\varepsilon>0$ that
\[
\mathbb E \Big(\sum_{q \le M}\Big(\frac{1}{\sigma_M} \tmop{Im} Y(q)\Big)^{\ell}\Big) \ll \frac{1}{L^{1-\varepsilon}}
\]
by Lemma \ref{Y bd}. Thus, by Lemma \ref{Y bd} each term in the inner sum on the right-hand side of
\eqref{comb} with  $\alpha_j \geq 3$ for some $j$
is $\ll 1/L^{1-\varepsilon}$. 
The contribution of all such terms is also $\ll 1/L^{1-\varepsilon}$.

The remaining terms have $\alpha_1=\ldots=\alpha_n=2$.
Since $\alpha_1+\cdots+\alpha_n=m$ we have that $m$ is even and $n=m/2$. Thus, the sum of these terms equals
\[
\frac{m!}{(2!)^{n}} \cdot \frac{1}{n!}
\sum_{\substack{r_1, \dots, r_n \le M  \\ r_j \, \, \mbox{\tiny{distinct}}}} \prod_{j=1}^n 
\mathbb E \Big(\Big(\frac{1}{\sigma_M} \tmop{Im} Y(r_j)\Big)^{2}\Big)
= \frac{m!}{2^{m/2} (m/2)!} \sum_{\substack{r_1, \dots, r_n \le M  \\ r_j \, \, \mbox{\tiny{distinct}}}} \prod_{j=1}^n 
\mathbb E \Big(\Big(\frac{1}{\sigma_M} \tmop{Im} Y(r_j)\Big)^{2}\Big).
\] 

To complete the proof we estimate the sum on the right-hand side.
Note by Lemma \ref{Y bd} that 
\[
0 \le \mathbb E \Big(\Big(\frac{1}{\sigma_M} \tmop{Im} Y(q)\Big)^{2}\Big) \ll
\begin{cases}
 \displaystyle \frac{1}{L^{2-\varepsilon}} \mbox{ if } q> L^2 \\
 \displaystyle \frac{1}{q^{1/2}L^{1-\varepsilon}} \mbox{ if } q \le L^2.
\end{cases}
\]
Thus,
\begin{equation} \notag
\begin{split}
&\bigg|\bigg(\mathbb E \Big(\sum_{q \le M }\Big(\frac{1}{\sigma_M} \tmop{Im} Y(q) \Big)^{2}\Big)\bigg)^n
-\sum_{\substack{r_1, \dots, r_n \le M  \\ r_n \neq r_{n-1}, \ldots, r_1}}
\prod_{j=1}^n \mathbb E \Big(\Big(\frac{1}{\sigma_M} \tmop{Im} Y(r_j)\Big)^{2}\Big)\bigg| \\
& \qquad \qquad \qquad \qquad \qquad \qquad \qquad \qquad \qquad \qquad \qquad 
\ll \frac{1}{L^{1-\varepsilon}} \, \, \mathbb E \Big(\sum_{q \le M }\Big(\frac{1}{\sigma_M} \tmop{Im} Y(q)\Big)^{2}\Big)^{n-1}.
\end{split}
\end{equation}
Iterating this argument, we see that
\begin{equation} \notag
\begin{split}
&\bigg|\bigg(\mathbb E \Big(\sum_{q \le M }\Big(\frac{1}{\sigma_M} \tmop{Im} Y(q) \Big)^{2}\Big)\bigg)^n
- \sum_{\substack{r_1, \dots, r_n \le M  \\ r_j \, \, \mbox{\tiny{distinct}}}}
\prod_{j=1}^n \mathbb E \Big(\Big(\frac{1}{\sigma_M} \tmop{Im} Y(r_j)\Big)^{2}\Big)\bigg| \\
& \qquad \qquad \qquad \qquad \qquad \qquad \qquad \qquad \qquad \qquad \qquad 
\ll \frac{1}{L^{1-\varepsilon}} \, \, \mathbb E \Big(\sum_{q \le M }\Big(\frac{1}{\sigma_M} \tmop{Im} Y(q)\Big)^{2}\Big)^{n-1}.
\end{split}
\end{equation}
Recalling the definition of $\sigma_M$, this gives
\[
\sum_{\substack{r_1, \dots, r_n \le M  \\ r_j \, \, \mbox{\tiny{distinct}}}}
\prod_{j=1}^n \mathbb E \Big(\frac{1}{\sigma_M} \tmop{Im} Y(r_j)\Big)^{2}=1+O(1/L^{1-\varepsilon}).
\]
Collecting estimates, we have
\begin{equation} \notag
\begin{split}
\mathbb E \Big( \Big(\frac{1}{\sigma_M} \cdot \sum_{q \le M} \tmop{Im} Y(q)\Big)^m\Big)
=
\begin{cases}
 \frac{m!}{2^{m/2} (m/2)! }+O\Big(\frac{1}{L^{1-\varepsilon}}\Big) \mbox{ if $m$ is even}, \\
O\Big(\frac{1}{L^{1-\varepsilon}}\Big) \mbox{ if $m$ is odd}.
\end{cases}
\end{split}
\end{equation}

\end{proof}

\subsection{The Proof of Theorem \ref{Main theorem}}

Let
\[
\mathbb P_{\omega, T}\Big( f(x) \in [\alpha, \beta]  \Big)=\int_{\mathbb R} \mathbf 1_{[\alpha, \beta]}(f(x)) \, \omega\Big(\frac{x}{T} \Big) \, \frac{dx}{T},
\]
where $\omega \geq0$ is a Schwartz function supported 
on the positive real numbers with unit mass.
By Proposition \ref{moments prop} it follows that for $M$ such that $M \ll T^{\varepsilon}$ for all $\varepsilon>0$
as $T \rightarrow \infty$
\[
\mathbb P_{\omega, T}\Big( \frac{1}{\sigma_M} S(x, L, M) \in [\alpha, \beta]  \Big)
=\frac{1}{\sqrt{2\pi}} \int_{\alpha}^{\beta} \! e^{-x^2/2} \, dx+o(1).
\]
Next note that by \eqref{sigma} we have $\sigma_M^2=\sigma^2+o(\sigma^2)$ if $L/\sqrt{M} \rightarrow 0$.
Thus, it follows that
\[
\mathbb P_{\omega, T}\Big( \frac{1}{\sigma} S(t, L, M) \in [\alpha, \beta]  \Big)
=\frac{1}{\sqrt{2\pi}} \int_{\alpha}^{\beta} \! e^{-t^2/2} \, dt+o(1).
\]
By Lemma \ref{conv prob} we have for any fixed $\epsilon>0$ that
\[
\mathbb P_{\omega, T}\Big( \frac{1}{\sigma} \Big(S(x, L)-S(x, L, M)\Big) > \epsilon  \Big)
\leq \frac{1}{\epsilon^2 \sigma^2}\bigg< \Big(S(x, L)-S(x, L, M)\Big)^2\bigg>=o(1)
\]
as $T \rightarrow \infty$. Thus, since $\epsilon>0$ is arbitrary we have as $T \rightarrow \infty$
\[
\mathbb P_{\omega, T}\Big( \frac{1}{\sigma} S(x, L) \in [\alpha, \beta]  \Big)
=\frac{1}{\sqrt{2\pi}} \int_{\alpha}^{\beta} \! e^{-t^2/2} \, dt+o(1).
\]

Let $\varepsilon>0$. We now choose $\omega$ such that $\omega(x)=1$ for $x \in[1+\varepsilon, 2-\varepsilon]$.
Since, $\omega\ge0$ and has unit mass we conclude that
\[
\int_{\mathbb R} \mathbf 1_{[\alpha, \beta]}\Big(\frac{1}{\sigma} S(x, L, M) \Big) \,
\Big(\mathbf 1_{[1,2]}\Big(\frac{x}{T} \Big)-\omega\Big(\frac{x}{T} \Big)\Big) \, \frac{dx}{T}
\leq \bigg(\int\limits_{-\infty}^{1+\varepsilon}+\int\limits_{2-\varepsilon}^{\infty} \bigg) \Big(\mathbf 1_{[1,2]}(x)+\omega(x)\Big) \, dx
\ll \varepsilon.
\]
Hence, as $T \rightarrow \infty$
\[
\frac{1}{T}\tmop{meas}\Big\{ x \in[T, 2T]: \frac{1}{\sigma} S(x, L) \in [\alpha, \beta]  \Big\}
=\frac{1}{\sqrt{2\pi}} \int_{\alpha}^{\beta} \! e^{-x^2/2} \, dx+o(1).
\]
To complete the proof first note that
for $2<h<x$ we have $\Delta(x+h)/(x+h)^{1/4}=\Delta(x+h)/x^{1/4}+O(h/x^{3/4})$,
since $\Delta(x)=O(x^{1/2})$. Next, observe that for $0< h \ll 1$ we have
$\Delta(x+h)=\Delta(x)+O(h \log x)+O(x^{\delta})$ for any $\delta>0$, by the bound
$d(n) \ll n^{\delta}$.  Thus,
as $T \rightarrow \infty$
\[
\frac{1}{\sigma} S(x, 2L)
=\frac{\Delta\Big(x+\frac{\sqrt{x}}{L}\Big)-\Delta(x)}{x^{1/4} \sqrt{\frac{8}{\pi^2} \frac{\log^3 L}{L}}}\left(1+o(1)\right)+o\left(1\right).
\]

\subsection{The proof of Theorem \ref{Main theorem Hecke}}

Let $f$ be a primitive cusp form of even weight
$k$ and level 1 with the previous notation. By Deligne's
bound we have $\left|\rho_f\left(n\right)\right|\leq d\left(n\right);$ the corresponding Dirichlet series
\[
\varphi\left(s\right)=\sum_{n=1}^{\infty}\frac{\rho_f\left(n\right)}{n^{s}}
\]
is absolutely convergent in the strip $\tmop{Re}\left(s\right)>1$,
and the classical theory shows that it has an analytic continuation
to the whole complex plane, with the functional equation
\[
\varphi\left(s\right)=\chi\left(s\right)\varphi\left(1-s\right),
\]
where
\[
\chi\left(s\right)=\left(-1\right)^{k/2}\left(2\pi\right)^{2s-1}\frac{\Gamma\left(\frac{k+1}{2}-s\right)}{\Gamma\left(s+\frac{k-1}{2}\right)}.
\]
We will use the following analog of formula (12.4.4) in \cite{Titchmarsh}, which is a special case of Theorem 1.2 in \cite{FriedIwaniec}: 

\begin{theorem}[Theorem 1.2 of \cite{FriedIwaniec} for $a\left(n\right)=\rho_f\left(n\right)$]
\label{thm:CuspVoronoi}For $x\geq1$, $N\le x$ and $\varepsilon>0$,
\begin{alignat*}{1}
\sum_{n\leq x}\rho_f\left(n\right) & =\frac{x^{1/4}}{\pi\sqrt{2}}\sum_{n\leq N}\rho_f\left(n\right)n^{-3/4}\cos\left(4\pi\sqrt{nx}-\frac{\pi}{4}\right)\\
 &\quad  +O_{\varepsilon,k}\left(\frac{x^{1/2+\varepsilon}}{\sqrt{N}}\right).
\end{alignat*}
\end{theorem}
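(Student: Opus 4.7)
The plan is to derive the formula from Perron's formula combined with the functional equation $\varphi(s) = \chi(s)\varphi(1-s)$. Starting from the effective Perron formula
\[
\sum_{n \le x}\rho_f(n) = \frac{1}{2\pi i}\int_{c-iT}^{c+iT}\varphi(s)\frac{x^s}{s}\,ds + O\!\left(\frac{x^{1+\varepsilon}}{T}\right),
\]
with $c = 1 + 1/\log x$, I would shift the contour to $\operatorname{Re}(s) = -\varepsilon$. Since $\varphi$ is entire (as $f$ is a cusp form), no residues are picked up beyond a harmless constant coming from the pole of $1/s$ at $s=0$, and the horizontal segments connecting the two vertical lines are negligible using the convexity bound for $\varphi(s)$ in the critical strip. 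Deligne's bound $|\rho_f(n)| \le d(n)$ controls the Perron error; the free parameter $T$ will be optimized at the end.

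On the shifted contour, apply the functional equation and expand $\varphi(1-s) = \sum_n \rho_f(n)n^{s-1}$ (absolutely convergent there), then interchange sum and integral to obtain $\sum_n \rho_f(n) G(nx)/n$, where $G(y) = \frac{1}{2\pi i}\int_{(-\varepsilon)}\chi(s)\,y^s/s\,ds$. To evaluate $G$, use the Mellin transform identity
\[
\int_0^\infty J_{k-1}(4\pi\sqrt{y})\,y^{s-1}\,dy = (2\pi)^{-2s}\frac{\Gamma(s+(k-1)/2)}{\Gamma((k+1)/2-s)},
\]
which shows that the Mellin transform of $J_{k-1}(4\pi\sqrt{y})$ is (up to an absolute constant) the reciprocal of $\chi(s)$. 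Substituting $s \mapsto 1-w$, using $\chi(s)\chi(1-s)=1$, and applying Mellin inversion yields
\[
G(y) = 2\pi(-1)^{k/2}\int_0^y J_{k-1}(4\pi\sqrt{t})\,dt + \mathrm{const},
\]
so after the change of variables $t=nu$ the $n$th contribution becomes $2\pi i^k \rho_f(n)\int_0^x J_{k-1}(4\pi\sqrt{nu})\,du$. An integration by parts using the asymptotic $J_{k-1}(u) = \sqrt{2/(\pi u)}\cos(u-(2k-1)\pi/4) + O(u^{-3/2})$ evaluates this Bessel integral to $\frac{\sqrt{2}(nx)^{1/4}}{4\pi^2 n}\sin\bigl(4\pi\sqrt{nx}-(2k-1)\pi/4\bigr) + O\bigl((nx)^{-1/4}/n\bigr)$, and the trigonometric identity $i^k\sin(\theta-(2k-1)\pi/4) = \cos(\theta-\pi/4)$ (valid for even $k$) produces precisely the main term $\frac{x^{1/4}}{\pi\sqrt{2}}\rho_f(n) n^{-3/4}\cos(4\pi\sqrt{nx}-\pi/4)$.

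The main obstacle is the tail bound after truncating the dual sum at $n = N$. Termwise estimation using Deligne fails, since $\sum_{n>N} d(n)\,n^{-3/4}$ does not even converge; the oscillation of the cosine must be exploited. The standard device is Abel summation combined with the bound $|\sum_{n\le y}\rho_f(n)| \ll y^{1/2+\varepsilon}$ (a classical consequence of the analytic properties of $\varphi(s)$ via Landau's theorem), which provides the required $\sqrt{N}$ savings; equivalently, one cuts the dual series already at the level of the contour integral and estimates the incomplete kernel by van der Corput/stationary phase analysis of $\chi(s)(nx)^s$. Balancing the Perron error $x^{1+\varepsilon}/T$ against the tail by choosing $T$ of order $\sqrt{Nx}$ produces the final error bound $O_{\varepsilon,k}(x^{1/2+\varepsilon}/\sqrt{N})$.
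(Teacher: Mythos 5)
The paper does not actually prove this statement: it is quoted verbatim from Theorem 1.2 of Friedlander--Iwaniec, so there is no internal argument to compare against. Your outline is essentially the standard derivation of the truncated Voronoi formula that Friedlander--Iwaniec use (and that Titchmarsh, \S 12.4, uses for $d(n)$): effective Perron, contour shift, functional equation, identification of the kernel with $2\pi i^{k}\int_0^x J_{k-1}(4\pi\sqrt{nu})\,du$ via the Mellin pair for $J_{k-1}(4\pi\sqrt{y})$, and the asymptotic expansion of the Bessel integral. The main-term bookkeeping checks out: the Mellin identity you quote is correct, and $i^k\sin(\theta-(2k-1)\pi/4)=\cos(\theta-\pi/4)$ does hold for even $k$, giving exactly the stated constant $1/(\pi\sqrt2)$, and balancing $x^{1+\varepsilon}/T$ with $T\asymp\sqrt{Nx}$ gives the stated error.

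The one genuine weak point is the first of your two proposed treatments of the truncation. Abel summation against $A_f(y)\ll y^{1/2+\varepsilon}$ does not deliver the required saving: with $g(t)=t^{-3/4}\cos(4\pi\sqrt{tx}-\pi/4)$ one has $g'(t)\ll \sqrt{x}\,t^{-5/4}$, so
\[
\int_N^{\infty} \left|A_f(t)\right|\left|g'(t)\right|dt \ \ll\ \sqrt{x}\int_N^{\infty} t^{-3/4+\varepsilon}\,dt\ \ll\ \sqrt{x}\,N^{1/4+\varepsilon},
\]
which, after multiplying by the prefactor $x^{1/4}$, \emph{grows} with $N$ rather than saving $\sqrt{N}$. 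The oscillations of $A_f(t)$ and of the phase $4\pi\sqrt{tx}$ are not aligned, so partial summation cannot capture the cancellation; the tail $\sum_{n>N}\rho_f(n)n^{-3/4}e(2\sqrt{nx})$ is a Wilton-type sum whose estimation is essentially as hard as the formula itself (and the infinite dual series converges only conditionally, so ``truncating the tail'' of an already-proved infinite formula is not the right framework anyway). Your second alternative is the correct one and is what Friedlander--Iwaniec do: truncate the Perron integral at height $T\asymp\sqrt{Nx}$, note that for $n>N$ the phase of $\chi(s)(nx)^{s}$ has no stationary point on the truncated segment so those terms are negligible by the first-derivative test, and only the terms $n\le N$ survive with their stationary-phase main terms. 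With that branch selected, your outline is a faithful reconstruction of the cited proof.
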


We can now deduce the analogous results regarding the distribution
of the sum of the Hecke eigenvalues $\rho_f\left(n\right)$ in short intervals:
recall that
\[
A_{f}\left(x\right)=\sum_{n\leq x}\rho_f\left(n\right),
\]
and define $F_{f}\left(x\right)=x^{-1/4}A_{f}\left(x\right)$,    $S_{f}\left(x,L\right)=F_{f}\left(\left(\sqrt{x}+\frac{1}{L}\right)^2\right)-F_{f}\left(x\right)$, with the condition that $L\ll T^\varepsilon$ for all $\varepsilon>0$.
Note that the formula in Theorem \ref{thm:CuspVoronoi} and the classical
formula (12.4.4) in \cite{Titchmarsh} are the same except for
the coefficients $\rho_f\left(n\right)$ which replace $d\left(n\right)$.
The calculation for the expectation
of $S_{f}\left(x,L\right)$ goes line by line as in the divisor's
case (i.e. $\left\langle S_{f}\left(x,L\right)\right\rangle \ll T^{-1/4+\varepsilon})$,
and so is the calculation for the variance, until the part which uses formula \eqref{eq:varformula} which is now replaced by Rankin's result \eqref{Rankin Selberg}
We conclude that
\begin{alignat*}{1}
\left\langle S_{f}\left(x,L\right)^{2}\right\rangle  & \sim\frac{1}{\pi^{2}}\sum_{n\leq T^{1-\delta}}\frac{\rho_f^{2}\left(n\right)}{n^{3/2}}\sin^{2}\left(\frac{2\pi\sqrt{n}}{L}\right)\\
 & \sim\frac{c_{f}}{\pi^{2}}\int_{1}^{T^{1-\delta}}\frac{1}{x^{3/2}}\sin^{2}\left(\frac{2\pi\sqrt{x}}{L}\right)\mbox{d}x\\
 & =\frac{2c_{f}}{\pi^{2}L}\int_{1/L}^{T^{1/2-\delta/2}/L}\frac{1}{y^{2}}\sin^{2}\left(2\pi y\right)\mbox{d}y\\
 & \sim\frac{2c_{f}}{\pi^{2}L}\int_{0}^{\infty}\frac{1}{y^{2}}\sin^{2}\left(2\pi y\right)\mbox{d}y=\frac{2c_{f}}{L}.
\end{alignat*}
Denote by
\[
\sigma_{f}^{2}=\frac{2c_{f}}{L}
\]
the variance we calculated.

Again, for any $M=O\left(T^{1-\delta}\right)$ such that $L/\sqrt{M}\to0,$
we define the ``short'' approximation to $S_f(x,L)$ by
\[
S_{f}\left(x,L,M\right)=\frac{-2}{\pi\sqrt{2}}\sum_{n\leq M}\frac{\rho_f\left(n\right)}{n^{3/4}}\sin\left(\frac{2\pi\sqrt{n}}{L}\right)\sin\left(4\pi\sqrt{n}\left(\sqrt x+\frac{1}{2L}\right)-\frac{\pi}{4}\right).
\]
 By similar considerations $\left\langle S_{f}\left(x,L,M\right)^{2}\right\rangle \sim\sigma_{f}^{2}$; the proof that $\left\langle \left(\frac{S_{f}\left(x,L\right)-S_{f}\left(x,L,M\right)}{\sigma_{f}}\right)^{2}\right\rangle \to0$
as $T\to\infty$ is again similar, and so is the proof of the
rest of the analogous claims, including the calculation of the higher
moments, which does not use any special property of the divisor function except for the property $d\left(n\right)\ll n^{\varepsilon}$,
which is also true for $\rho_f\left(n\right)$ by Deligne's bound. We
conclude that $\frac{S_{f}\left(x,L\right)}{\sigma_{f}}$ has a standard Gaussian limiting distribution as $T\to\infty$, from which Theorem \ref{Main theorem Hecke}
follows in the same manner as before (the bound $A_f(x) \ll x^{1/2}$ easily follows from Theorem \ref{thm:CuspVoronoi}).

\section*{Acknowledgments}
We would like to thank Ze\'ev Rudnick for suggesting this problem as well as for numerous helpful discussions
and remarks. 

\section*{Funding}
The research leading to these results has received funding
from the European Research Council under the European Union's
Seventh Framework Programme (FP7/2007-2013) / ERC grant agreement
n$^{\text{o}}$ 320755.

\end{document}